\title{Closed 1-form on topological spaces, and Lusternik-Schnirelman type Category}
\author{Kazuyoshi Watanabe}
\date{}
\theoremstyle{definition}
\newtheorem{dfn}{Definition}
\newtheorem{lem}{Lemma}
\newtheorem{prop}{Proposition}
\newtheorem{thm}{Theorem}
\newtheorem{cor}{Corollary}
\newtheorem{clm}{claim}
\newtheorem*{remark}{{\rm Remark}}
\newtheorem*{example}{{\rm Example}}
\address{Tohoku University, Sendai~985-8578, Japan}
\email{kazuyoshi.watanabe.q5@dc.tohoku.ac.jp}
\subjclass[2010]{Primary~58E05, Secondary~57R70}
\begin{document}
\maketitle
This article deals with a continuous closed 1-form defined on a CW-complex. In particular, we show Lusternik-Schnirelmann type theory on continuous closed 1-forms which is related to gradient-like flows. M.Farber defined a continuous closed 1-form and a category with a respect to a cohomology class and constructed a Lusternik-Schnirelman Theory for smooth closed 1-forms. We denote about a definition of a zero-point of continuous closed 1-form and generalize this theory to continuous closed 1-forms. It shows a relation between the number of a zero-point of continous closed 1-form and the category with a respect to a cohomology class, through a dynamical system on a original space, homoclinic cycle.
\section{Introduction}
In 1981 S.P.Novikov initiated Morse theory closed 1-form which gave topological estimates of the numbers of zeros of closed 1-forms \cite{nov}. It was motivated by several important problems in mathematical physics. His fundamental idea was based on a plan to construct a chain complex called the Novikov complex, which uses a dynamics of the gradient flows on the abelian covering associated with a cohomology class. Dynamics of gradient flows appears traditionally in Morse theory, providing a bridge between the critical set of a function and topology. At present the Novikov theory of closed 1-forms is a rapidly developing area in topology which interacts with various mathematics, for example in symplectic homology, Floer theory, and so on. \par
In Novikov theory, there are two another invariants called Novikov betti number $b_{j}(\xi)$ and Novikov tortion number $q_{j}(\xi)$ with respect to a cohomology class $\xi \in H^1 (X;{\bf R})$. So fare Novikov inequality are known, which give some relation between three invariants for a morse closed 1-form $\omega$:
$$
c_{j} (\omega) \leq b_{j} (\xi) + q_{j} (\xi) + q_{j-1} (\xi),
$$
where $c_{j} (\omega)$ is the number of zeros of morse 1-form $\omega$ having index $j$.

In 2002 M.Farber introduced a new Lusternik-Schnirelmann type theory for closed 1-forms in \cite{far2}, \cite{far3}. This theory aims at finding relations between topology of the zero set of a closed 1-form and homotopical information, based on the cohomology class of the form. In classical Lusternik-Schnirelmann theory, the category of a smooth manifold gives a lower bound of the number of critical points of smooth functions on a manifold. On the other hand, Lusternik-Schnirelmann theory for closed 1-forms is different from classical one. In fact in any non-zero cohomology class there always exists a closed 1-form having at most one zero. Hence, the it gives information not only about the number of zeros but also about qualitative dynamical properties of smooth flows on manifolds. Farber estimated this category by a cup-length similarly to classical Lusternik-Schnirelmann theory. To define the category with respect to a cohomology class, Farber introduced a closed 1-form on  topological space, which we call continuous closed 1-form \cite{far2}, \cite{far3}.
    
In section2 we define a continuous closed 1-form. It is introduced by M.Farber and is a generalization of a smooth closed 1-form on a smooth manifold.
\begin{dfn}
A continuous closed 1-form $\omega $ on  X is defined as a collection  $\{ f _{U} \}_{U \in \mathcal{ U}} $ of continuous real-valued functions  $f_{U} : U \rightarrow {\bf R}$ , where  $\mathcal{U} = \{ U \}$is an open cover of $X$, such that for any pair $U,V \in \mathcal{U}$
\begin{eqnarray}
f_U | _{U \cap V} - f_V | _{U \cap V} : U \cap V \rightarrow {\bf R}
\end{eqnarray}
is locally constant function.
\end{dfn}
Any cohomology class can be represented by a continuous closed 1-form on some reasonable class of topological spaces, and an integral of continous 1-form along path can be defined. So, continuous 1-forms have same properties with smooth 1-forms.

In section3 and 4, we define a category with respect to a cohomology class, and proof a Lusternik-Schnirelman Theory about a continous closed 1-form on a finite CW-complex. A zero-point of a smooth closed 1-form $\omega$ is defined as a point $p\in M$ with $\omega _{p} =0$ for a smooth section $M \rightarrow T^{*} M$. But a continuous closed 1-form is not represented by such sections, so the zero point of a continuous closed 1-form is defined by using gradient-like flows, that is a point $p\in X$ is said to be zero point of $\omega$ if $p$ is a fixed point for any gradient-like flow of $\omega$. For using this definition we describe the Lusternik-Schnirelman theorem.
 \begin{thm}
 Let $X$ be a finite CW-complex, and let $\omega$ be a continuous closed 1-form on $X$, that represents a cohomology class $\xi \in H^{1} (X ; {\bf R})$ Let $\phi$ be a gradient-like flow of $\omega$ and $\operatorname{Fix} (\varphi^{t}) $ is finite set ${p_{1},...,p_{k}} $, where integer $k>0$. If $k < \operatorname{cat}(X, \xi)$, then the flow $\varphi$ has a homoclinic cycle.
 \end{thm}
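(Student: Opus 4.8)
The plan is to prove the contrapositive: assuming that the gradient-like flow $\varphi$ has no homoclinic cycle, I will show that $\operatorname{cat}(X,\xi)\le k$, which directly contradicts the hypothesis $k<\operatorname{cat}(X,\xi)$. The whole argument rests on using the flow $\varphi$ together with a primitive of $\omega$ on a suitable covering as a Lyapunov function, and then decomposing $X$ into the $k$ basins of the zeros $p_1,\dots,p_k$ together with one ``escaping'' set that can be pushed to infinity along the cohomology class $\xi$.

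First I would fix the covering $\pi\colon\bar X\to X$ on which the pulled-back form $\pi^{*}\omega$ becomes exact, choose a primitive $F\colon\bar X\to{\bf R}$ with $dF=\pi^{*}\omega$, and lift $\varphi$ to a flow $\bar\varphi$ on $\bar X$. Because $\varphi$ is gradient-like for $\omega$, the function $F$ strictly decreases along every non-constant lifted trajectory; it is exactly the Lyapunov function that organises the dynamics, and the nontrivial deck translations shift its values by the periods of $\xi$. Next, since $\operatorname{Fix}(\varphi^{t})=\{p_1,\dots,p_k\}$ is finite, I would select pairwise disjoint compact isolating neighbourhoods $N_i\ni p_i$, small enough that each $N_i$ is $\xi$-null in Farber's sense, i.e.\ a contractible neighbourhood on which a deformation with arbitrarily negative $\omega$-integral can be produced using $\bar\varphi$.

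The heart of the proof is the dichotomy supplied by the absence of homoclinic cycles. Using compactness of $X$ and the fact that $F$ is a strict Lyapunov function, I would argue that the chain-recurrent set of $\varphi$ reduces to the zeros $p_1,\dots,p_k$, so that for every $x\in X$ the forward trajectory $\{\varphi^{t}(x)\}_{t\ge0}$ either converges to some $p_i$ or leaves every compact piece with $F\to-\infty$ along its lift; no trajectory can be trapped in a recurrent loop, precisely because such a loop closing up with a nonzero period would be a homoclinic cycle. This lets me define $F_i$ ($1\le i\le k$) as the set of points whose forward orbit enters $N_i$, and $F_0$ as the set of points that escape to infinity; a Conley-index / uniform-time argument then gives that these sets are open and cover $X$. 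Each $F_i$ retracts along the flow into $N_i$ and is therefore $\xi$-null, while $F_0$ is movable in the direction $\xi$ because $F\to-\infty$ there, so $F_0$ is the single movable set allowed in the definition of $\operatorname{cat}(X,\xi)$. Counting, the open cover $X=F_0\cup F_1\cup\dots\cup F_k$ yields $\operatorname{cat}(X,\xi)\le k$, the desired contradiction.

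I expect the main obstacle to be the purely topological, non-smooth nature of the flow: without smooth transversality, Morse charts, or a genuine gradient vector field, the convergence/escape dichotomy and the openness of the basins must be extracted from continuity of $\varphi$ and the Lyapunov property of $F$ alone, most likely through Conley-theoretic limit arguments and a careful uniform-time estimate. A secondary technical point is to verify that the flow-induced deformations literally satisfy Farber's definition of movability, that is, that the $\omega$-integral along the deformation trajectories can be made as negative as required; matching the homotopies to that quantitative condition, and handling the overlaps between the basins $F_i$, is where the bookkeeping will be most delicate.
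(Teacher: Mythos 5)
Your overall strategy is the same as the paper's: argue the contrapositive, and use the flow itself to manufacture an open cover of $X$ by $k$ contractible ``basins'' of the fixed points together with one distinguished set that is pushed arbitrarily far in the direction of $-\xi$, forcing $\operatorname{cat}(X,\xi)\le k$. But two of your steps, as stated, would fail, and they are precisely where the paper does its real work. The set $F_0$ of points whose lifted orbit escapes to $-\infty$ is in general \emph{not} open (a point whose orbit descends below every level can have arbitrarily close neighbours that are eventually captured by some $p_i$, so the escaping set is only a countable intersection of open sets), and in any case Farber's definition requires, for each fixed $N$, an open cover whose distinguished piece deforms with $\int_{\gamma_x}\omega\le -N$. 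The paper therefore fixes $N$ first and takes $F$ to be the set of points whose forward orbit integral reaches $-N$ in finite time (open by continuity of the flow, and carrying the required deformation by flowing for time $t_p$), with $F_j$ the set of points reaching $\operatorname{Int}V_j$ while the integral is still $>-N$. Your qualitative convergence/escape dichotomy must be replaced by this quantitative, $N$-truncated one for the cover to exist at all.

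The second and more serious gap concerns where ``no homoclinic cycle'' actually enters. It is not used through a statement about the chain-recurrent set; it is used to construct the neighbourhoods $V_i$ themselves. The danger is that an orbit can leave $V_i$, travel around a loop of negative period, and re-enter $V_i$ (or chain through several $V_j$'s and return); this destroys both the consistency of the threshold $-N$ in the definition of $F_j$ and the continuity of the first-entry-time retraction of $F_j$ into $V_j$. The paper handles this with two ingredients your sketch lacks: a ``gradient convex'' neighbourhood lemma guaranteeing that the set of times an orbit spends in $V_i$ (upstairs in $\tilde X_\xi$) is a single interval, and a compactness/limiting argument showing that if no shrinking of the $V_i$ excludes returns with integral drop $\ge N$, then sequences of near-returning orbits accumulate onto a concatenation of connecting trajectories that closes up into a genuine homoclinic cycle, the chain terminating after at most $[N/b]$ steps where $b>0$ is a uniform lower bound on the drop between distinct $V_i$'s. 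That limiting construction is the heart of the proof; invoking Conley-index language does not by itself supply it, so as written your argument has a genuine missing idea rather than mere bookkeeping to fill in.
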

It follows from the definition of zero-points we obtain the following conclusions:
\begin{cor}
(1) Let $X$ be a finite CW-complex, and let $\omega$ be a continuous closed 1-form, and $\xi =[ \omega ] \in H ^1 ( X; {\bf R})$.Suppose that $\omega$ has finite zero points $p_1,..,p_k$ . If $k < \operatorname{cat}(X, \xi)$, then any gradient-like flow that having fixed points only $p_1,..,p_k$ has a homoclinic cycle.\\
 (2) Let $\omega$ be a continuous closed 1-form on a finite CW-complex $X$, and $\omega$ represents $\xi \in H^{1} (X; {\bf R})$ . If $\omega$ admits a gradient-like flow with no homoclinic cycles, then $\omega$ has at least $\operatorname{cat} (X,\xi)$ distinct zeros.
 \end{cor}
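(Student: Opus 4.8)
The plan is to derive both statements directly from the Theorem, so that no new dynamical input is needed beyond the definition of a zero-point as a fixed point of gradient-like flows; the entire content of the corollary is a reformulation of the Theorem together with the passage from fixed points of a prescribed flow to zeros of $\omega$.

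For part (1), I would first observe that the hypothesis ``$\varphi$ has fixed points only $p_1,\dots,p_k$'' means precisely $\operatorname{Fix}(\varphi^{t})=\{p_1,\dots,p_k\}$, a finite set of $k>0$ elements, and that these points are the prescribed zeros of $\omega$. Since the remaining hypothesis $k<\operatorname{cat}(X,\xi)$ is exactly the hypothesis of the Theorem, part (1) is an immediate specialization: the Theorem guarantees that such a $\varphi$ carries a homoclinic cycle. Thus the only thing to verify is that ``fixed points only $p_1,\dots,p_k$'' is the same condition as $\operatorname{Fix}(\varphi^{t})=\{p_1,\dots,p_k\}$ in the Theorem, which is purely a matter of unwinding definitions.

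For part (2), I would argue by contraposition. Let $\varphi$ be a gradient-like flow of $\omega$ admitting no homoclinic cycle. Granting the identification of the rest points of a gradient-like flow with the zeros of $\omega$ (taken up below), I would first dispose of the degenerate cases: if $\operatorname{Fix}(\varphi^{t})$ is infinite, then $\omega$ has infinitely many zeros and the inequality is trivial, and if $\operatorname{cat}(X,\xi)=0$ there is nothing to prove. So assume $\operatorname{Fix}(\varphi^{t})=\{p_1,\dots,p_k\}$ is finite and nonempty. Were $k<\operatorname{cat}(X,\xi)$, the Theorem would produce a homoclinic cycle for $\varphi$, contradicting the hypothesis; hence $k\ge\operatorname{cat}(X,\xi)$. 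Identifying $\operatorname{Fix}(\varphi^{t})$ with the zero set of $\omega$ then yields at least $\operatorname{cat}(X,\xi)$ distinct zeros, as claimed.

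The step I expect to be the main obstacle is exactly this identification, for the \emph{given} flow $\varphi$, of $\operatorname{Fix}(\varphi^{t})$ with the zero set of $\omega$. The definition makes the zeros the points fixed by every gradient-like flow, so a priori one has only $\operatorname{Zero}(\omega)\subseteq\operatorname{Fix}(\varphi^{t})$, possibly with strict inclusion — the wrong direction for a lower bound on the number of zeros. To close this gap I would verify the reverse inclusion $\operatorname{Fix}(\varphi^{t})\subseteq\operatorname{Zero}(\omega)$, i.e. that every rest point of a gradient-like flow is a zero of $\omega$; equivalently, that near a regular point of $\omega$ one can always arrange the flow to move, so a regular point is never forced to be a rest point. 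This local analysis of gradient-like flows near regular points, rather than the global Lusternik-Schnirelmann count, is where the real work lies, and once it is established both parts of the corollary follow formally from the Theorem.
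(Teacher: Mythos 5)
Part (1) of your argument is exactly the paper's: the hypothesis ``fixed points only $p_1,\dots,p_k$'' is literally the hypothesis $\operatorname{Fix}(\varphi^{t})=\{p_1,\dots,p_k\}$ of Theorem~\ref{main}, and the paper's entire proof of the corollary is the one phrase ``by definition of zero points''; the paper then separately supplies, in the Proposition of Section~4.6, a gradient-like flow whose fixed set is exactly the zero set, so that (1) is not vacuous. So far you agree with the paper.

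The gap is in your treatment of (2). You correctly isolate the crux --- the Theorem bounds $\#\operatorname{Fix}(\varphi^{t})$ from below while the conclusion concerns $\#\operatorname{Zero}(\omega)$, and the definition only gives $\operatorname{Zero}(\omega)\subseteq\operatorname{Fix}(\varphi^{t})$ --- but the repair you propose, the reverse inclusion $\operatorname{Fix}(\varphi^{t})\subseteq\operatorname{Zero}(\omega)$ for an arbitrary gradient-like flow, is false under the paper's definitions, and the statement you offer as its ``equivalent'' is not equivalent to it. A gradient-like flow is only required to decrease $f_U$ strictly on $U\setminus\operatorname{Fix}(\varphi^{t})$; nothing forbids it from resting at a non-zero point. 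Concretely, on $S^1$ with $\omega=d\theta$ the flow of the vector field $-(1-\cos\theta)\,\partial_\theta$ is gradient-like and fixes $\theta=0$, yet $\theta=0$ is not a zero of $\omega$, since the rotation flow is also gradient-like and fixes nothing. Your reformulation (``near a regular point one can always arrange the flow to move'') is only the tautology that each non-zero point is moved by \emph{some} gradient-like flow; it does not show that the \emph{given} cycle-free flow moves it. The route consistent with the paper is instead to argue the contrapositive via the Proposition of Section~4.6: if $\omega$ had only $k<\operatorname{cat}(X,\xi)$ zeros, take a gradient-like flow fixing exactly those $k$ points and apply the Theorem to produce a homoclinic cycle. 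Note, however, that this contradicts only the existence of a cycle-free flow \emph{with that fixed set}; deducing (2) for an arbitrary cycle-free flow, whose fixed set may be strictly larger or infinite, requires a further argument that neither you nor the paper supplies.
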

This cororally is analogue to smooth Lusternik-Schnirelmann Thoery about a smooth closed 1-form on a smooth compact manifold. And smooth thoery deals with a smooth synamical system, but this theory can deal with not only smooth manifolds but also finite CW-complexes, so we can consider about the continuous dynamical systems on CW-complexes.\par
 
\section{Definition of continuous closed 1-form}
We assume $X$ is a CW-complex. Now we define a continuous closed 1-form, it is introduced by M.Farber in \cite{far1}, \cite{far3}.
\begin{dfn}
A continuous closed 1-form $\omega $ on  X is defined as a collection  $\{ f _{U} \}_{U \in \mathcal{U}} $ of continuous real-valued functions  $f_{U} : U \rightarrow {\bf R}$ , where  $\mathcal{U} = \{ U \}$is an open cover of $X$, such that for any pair $U,V \in \mathcal{U}$,
\begin{eqnarray}
f_U | _{U \cap V} - f_V | _{U \cap V} : U \cap V \rightarrow {\bf R}
\end{eqnarray}
is locally constant function.
\end{dfn}
Two 1-forms, $\{ f _{U} \}_{U \in \mathcal{U}} $ and $\{ g_{V}  \}_{V \in \mathcal{V}}$ where $\mathcal{V}  $ is another open cover of $X$, are equivalent, if $\{f_U , g_V \} _{U\in \mathcal{U} , V\in \mathcal{V}} $ is a continuous closed 1-form on $X$. We can define a pullback of continuous closed 1-form. Let $\phi : Y \rightarrow X$ be continuous map and let $\omega =\{ f_U \}_{U \in \mathcal{U}} $ be a continuous closed 1-form on $X$, where $\mathcal{U} $ is an open cover of $X$.\par
The pull back of the continuous closed 1-form $\omega $ can be defined $\phi^{*} \omega = \{f_{U} \circ \phi \} _{\phi ^{-1}(U) \in \phi ^{-1} \mathcal{U}}$, where $\phi ^{-1} \mathcal{U} =  \{\phi ^{-1} (U) \}_{U \in \mathcal{U}}$. Any continuous function $f:X \rightarrow {\bf R}$ defines a closed 1-form on $X$ with open cover $\mathcal{U}= \{X \}$. This closed 1-form is denoted by $df$.

Let $\omega =\{ f _{U} \}_{U \in \mathcal{U}}$ be a continuous closed 1-form on $X$, and let $\gamma : [0,1] \rightarrow X$ be a continuous path. Find a subdivision $t_{0} =0 < t_
{1} < ... < t_{N} =1$ of interval {[0,1]} such that for any $i$ the image $\gamma [t_{i},t_{i+1}]$ is contained in a single open set $U_{i} \in \mathcal{U}$. Then we define an integral of a continuous closed 1-form $\omega$ along a path $\gamma$, about
\begin{eqnarray}
\int_{\gamma} \omega = \sum ^{N-1}_{i=1} [f_{U_{i}}(\gamma (t_{i+1})) - f_{U_{i}} (\gamma (t_{i}))].
\end{eqnarray}
The integral does not depend on the choice of the subdivision and the open cover $\mathcal{U}$, so the integral is well-defined.
\begin{lem}
Let $\omega$ be a continuous closed 1-form on X. For any pair of continuous paths $\gamma$, $\gamma '$ on X with common beginning and common end points, provided that $\gamma$ and $\gamma '$ are homotopic relative to the boundary, it holds that
\begin{eqnarray}
\int _{\gamma} \omega = \int _{\gamma '} \omega
\end{eqnarray}
\end{lem}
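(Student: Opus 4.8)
The plan is to use the homotopy $H \colon [0,1]\times[0,1] \to X$ relative to the boundary, with $H(\cdot,0)=\gamma$, $H(\cdot,1)=\gamma'$, and with $H(0,t)$ and $H(1,t)$ constant in $t$, and to prove that the integral along the horizontal slices $s \mapsto H(s,t)$ does not depend on $t$. The first step is to record a local fact: if a path $\delta$ lies entirely inside a single chart $U \in \mathcal{U}$, then taking the trivial subdivision in the definition gives $\int_\delta \omega = f_U(\delta(1)) - f_U(\delta(0))$, so that within one chart the integral depends only on the endpoints. In particular, the integral of $\omega$ around any loop contained in a single $U$ vanishes.

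Next I would produce a good subdivision of the square. Since $[0,1]^2$ is compact and $\{H^{-1}(U)\}_{U\in\mathcal{U}}$ is an open cover, the Lebesgue number lemma yields partitions $0=s_0<\dots<s_m=1$ and $0=t_0<\dots<t_n=1$ such that each subrectangle $R_{ij}=[s_i,s_{i+1}]\times[t_j,t_{j+1}]$ satisfies $H(R_{ij})\subset U_{ij}$ for some $U_{ij}\in\mathcal{U}$. By the well-definedness of the integral noted above (independence of the subdivision and of the cover), I may compute every horizontal slice integral by breaking it at the points $s_i$. The core computation is then a cancellation argument: for fixed $j$, the integral of $\omega$ around the positively oriented boundary $\partial R_{ij}$ is zero by the local fact, since $H(R_{ij})\subset U_{ij}$. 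Summing these over $i=0,\dots,m-1$, the interior vertical edges cancel in adjacent pairs, while the two outermost vertical edges, lying over $s=0$ and $s=1$, are images of the constant paths $t\mapsto H(0,t)$ and $t\mapsto H(1,t)$ (here the rel-boundary hypothesis is essential) and so contribute nothing. What survives is exactly
\[
\int_{H(\cdot,\,t_{j+1})}\omega-\int_{H(\cdot,\,t_j)}\omega=0 ,
\]
and summing over $j=0,\dots,n-1$ telescopes to $\int_{H(\cdot,1)}\omega=\int_{H(\cdot,0)}\omega$, that is $\int_{\gamma'}\omega=\int_\gamma\omega$.

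The step I expect to require the most care is the bookkeeping of this cancellation: orienting each $\partial R_{ij}$ consistently, checking that a shared vertical edge appears with opposite orientations in the two neighboring rectangles, and confirming that the boundary edges are genuinely constant under the rel-boundary homotopy so that their contributions drop out. The underlying analytic input, namely that a loop contained in one chart has vanishing integral, is immediate from the definition; once the combinatorics of the grid are arranged correctly the conclusion follows formally.
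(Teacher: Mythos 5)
Your proof is correct, but it is considerably more detailed than, and structurally different from, what the paper actually writes: the paper's entire proof is the sentence ``It is sufficient to consider the condition of 2 open covers. This case is easy,'' accompanied by a figure, i.e.\ it gestures at reducing to a situation where the relevant portion of $X$ is covered by two charts and leaves everything else implicit. Your route is the standard complete argument: pull back the cover along the homotopy $H$, use compactness of the square and the Lebesgue number lemma to get a grid of subrectangles each mapping into a single chart, observe that the integral of a path lying in one chart $U$ equals $f_U(\delta(1))-f_U(\delta(0))$ (so loops in one chart integrate to zero), and then telescope via boundary cancellation. The two points your argument leans on --- that a shared vertical edge contributes the same value whether computed in $U_{ij}$ or $U_{i+1,j}$, and that a horizontal slice integral can be computed with either the charts of the row above or the row below --- are both instances of the well-definedness of $\int_\gamma\omega$ (independence of subdivision and cover), which the paper asserts just before the lemma, so citing it there is legitimate. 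The rel-boundary hypothesis enters exactly where you say it does, in killing the outermost vertical edges. In short, your proof buys a rigorous, self-contained verification where the paper offers only a sketch whose claimed reduction to two open sets is not obviously achievable in general; nothing in your argument is missing or wrong.
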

\begin{proof}
It is sufficient to consider the condition of 2 open covers. This case is easy. 
\begin{figure}[htbp]
  \begin{center}
 \includegraphics[width=3cm, bb=0 0 300 300]{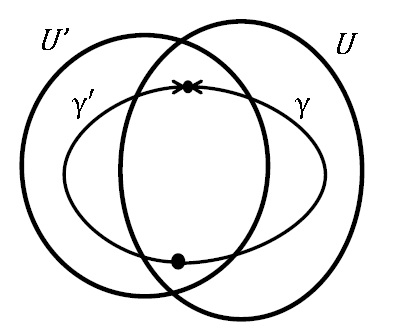}
\caption{The case that the open cover is $\{U,U' \}$}
\end{center}
\end{figure}
\end{proof}

By lemma, any continuous closed 1-form $\omega$ defines the homomorphism of periods
\begin{eqnarray}
\pi _{1} (X,x_{0}) \rightarrow {\bf R},\
[\gamma ] \mapsto \int _{\gamma } \omega \in {\bf R}.
\end{eqnarray}
The homomorphism of periods is group homomorphism.

\begin{lem}
Let $X$ be a locally path-connected topological space. A continuous closed 1-form $\omega$ on $X$ equals $df$ for a continuous function $f :X \rightarrow {\bf R}$ if and only if for any choice of the base point $x_{0} \in X$ the homomorphism of periods determined by  $\omega$ is trivial.
\end{lem}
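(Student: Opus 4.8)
The plan is to prove the two implications separately, the forward direction being immediate from the definition of the integral and the reverse direction requiring an explicit construction of a potential function. For the forward direction, suppose $\omega = df$. By definition this means $\omega$ is equivalent to the 1-form $\{f\}$ associated to the cover $\{X\}$, and since the integral of a continuous closed 1-form does not depend on the representative or the open cover used to compute it, for any loop $\gamma$ based at $x_0$ we may compute $\int_\gamma \omega = f(\gamma(1)) - f(\gamma(0))$ using the single-set subdivision. As $\gamma(1) = \gamma(0)$ this vanishes, so the homomorphism of periods is trivial for every base point.

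For the reverse direction, assume that every period homomorphism is trivial. First I would choose a base point $x_C$ in each path component $C$ of $X$, and for a point $x$ lying in $C$ define $f(x) = \int_\gamma \omega$, where $\gamma$ is any continuous path from $x_C$ to $x$. The well-definedness of $f$ is where the hypothesis enters: if $\gamma$ and $\gamma'$ are two such paths, then $\gamma \cdot \overline{\gamma'}$ is a loop at $x_C$, so by the additivity of the integral under concatenation and its sign change under reversal (both immediate from the integral formula), together with the triviality of periods, one gets $\int_\gamma \omega = \int_{\gamma'} \omega$.

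The crux is the local comparison of $f$ with the local data $f_U$, and this is exactly where local path-connectedness is used. Given $x$ lying in an open set $U \in \mathcal{U}$, local path-connectedness provides a path-connected open set $V$ with $x \in V \subseteq U$. For any $y \in V$, choose a path $\delta$ from $x$ to $y$ whose image lies in $V$; then $f(y) - f(x) = \int_\delta \omega$, and since $\delta$ stays inside the single open set $U$, the integral formula collapses to $\int_\delta \omega = f_U(y) - f_U(x)$. Rearranging gives $f(y) - f_U(y) = f(x) - f_U(x)$ for all $y \in V$, so $f - f_U$ is constant on $V$ and hence locally constant on $U$. This single computation yields both conclusions at once: on $V$ the function $f$ equals $f_U$ plus a constant, so $f$ is continuous, and the collection $\{f, f_U\}_{U \in \mathcal{U}}$ satisfies the locally-constant-difference condition, which is precisely the statement that $\{f,f_U\}$ is a continuous closed 1-form, i.e.\ $\omega = df$.

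I expect the continuity of $f$, or equivalently the local-constancy of $f - f_U$, to be the main obstacle, since without the local path-connectedness hypothesis one cannot guarantee a path-connected neighborhood $V$ on which the difference $f(y)-f(x)$ can be evaluated by a path confined to a single chart $U$; the passage to several path components is only a minor bookkeeping point handled by fixing one base point per component.
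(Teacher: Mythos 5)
Your proposal is correct and follows the same route as the paper: the forward direction is immediate from the integral formula, and the converse constructs $f(x)=\int_{x_C}^{x}\omega$ from a base point in each (path) component, with well-definedness coming from the triviality of periods. The paper only sketches this; your additional details --- in particular using local path-connectedness to produce a path-connected $V\subseteq U$ on which $f-f_U$ is constant, which simultaneously gives continuity of $f$ and the identity $df=\omega$ --- are exactly the right way to fill in the outline.
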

\begin{proof} 
We denote the outline of proof.

If $\omega = df$, it is obvious.

Conversely, assume that the homomorphism of periods is trivial. In each connected component of $X$ choose a base point $x_{i} \in X$. One defines a continuous function $f:X \rightarrow {\bf R}$ by
\begin{eqnarray}
f(x) = \int _{x_{i}} ^{x} \omega
\end{eqnarray}
here $x$ and $x_{i}$ belong to the same component of $X$ and integration is taken along an arbitrary path connecting $x_{i}$ to $x$. Then $df = \omega$.
\end{proof}
\subsection{Representations of the cohomology class}

Any continuous closed 1-form $\omega$ on a topological space $X$ defines a singular cohomology class $[\omega] \in H^1 (X;{\bf R})$. It is defined by the homomorphism of periods viewed as an element of $\operatorname{Hom} (H_{1} (X;{\bf R})) = H^1 (X;{\bf R})$.
\begin{lem}
Let $X$ be a paracompact Hausdorff homologically locally 1-connected topological space. Then any singular cohomology class $\xi \in H^{1}(X;{\bf R} )$ may be realized by a continuous closed 1-from on $X$.
\end{lem}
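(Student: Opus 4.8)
The plan is to realize $\xi$ by solving a Čech-type equation with the help of a partition of unity, and to transport $\xi$ from singular cohomology into Čech (sheaf) cohomology using the local hypotheses. First I would reinterpret the notion of a continuous closed 1-form sheaf-theoretically. Let $\mathcal{C}$ denote the sheaf of germs of continuous real-valued functions on $X$, and let $\underline{\bf R}$ denote the locally constant sheaf with stalk ${\bf R}$. Unwinding Definition~1, a continuous closed 1-form $\{f_U\}_{U\in\mathcal{U}}$ is precisely a global section of the quotient sheaf $\mathcal{C}/\underline{\bf R}$: the data of local continuous functions $f_U$, taken modulo the relation that $f_U-f_V$ is locally constant on overlaps. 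The short exact sequence of sheaves
$$
0 \to \underline{\bf R} \to \mathcal{C} \to \mathcal{C}/\underline{\bf R} \to 0
$$
then produces a long exact sequence in sheaf cohomology, whose connecting homomorphism $\delta : H^0(X;\mathcal{C}/\underline{\bf R}) \to H^1(X;\underline{\bf R})$ sends a continuous closed 1-form to the cohomology class it determines through the homomorphism of periods.

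Second, I would prove that $\delta$ is surjective. Since $X$ is paracompact Hausdorff, the sheaf $\mathcal{C}$ admits partitions of unity subordinate to any open cover, hence is fine and therefore soft, so that $H^1(X;\mathcal{C})=0$. Exactness of
$$
H^0(X;\mathcal{C}/\underline{\bf R}) \xrightarrow{\ \delta\ } H^1(X;\underline{\bf R}) \to H^1(X;\mathcal{C})=0
$$
then forces $\delta$ to be onto $H^1(X;\underline{\bf R})$. To keep the identification with the period class transparent, I would also record the underlying partition-of-unity construction explicitly. Given a Čech $1$-cocycle $\{c_{UV}\}$ representing a class in $\check{H}^1(X;{\bf R})$, so each $c_{UV}$ is locally constant on $U\cap V$ with $c_{VW}-c_{UW}+c_{UV}=0$, and given a partition of unity $\{\rho_W\}$ subordinate to the cover, I set $f_U=\sum_W \rho_W\,c_{WU}$, each summand extended by zero off $W\cap U$. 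Each $f_U:U\to{\bf R}$ is continuous, and the cocycle relation gives $f_U-f_V=-c_{UV}$ on $U\cap V$, which is locally constant; hence $\{f_U\}$ is a continuous closed 1-form whose class is $[\{c_{UV}\}]$ up to sign.

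Finally, the main obstacle is matching the target of this construction with singular cohomology. Everything above produces classes in sheaf/Čech cohomology, $H^1(X;\underline{\bf R})\cong\check{H}^1(X;{\bf R})$, whereas the statement concerns singular cohomology $H^1(X;{\bf R})$. The three hypotheses—paracompact, Hausdorff, and homologically locally $1$-connected—are exactly what is needed for the canonical comparison map between singular and Čech (sheaf) cohomology to be an isomorphism in degree $1$; the local acyclicity packaged in the HLC condition is the essential ingredient, and I expect this comparison theorem to be the genuinely technical step rather than the period computation. I would therefore invoke the comparison isomorphism to transport an arbitrary singular class $\xi\in H^1(X;{\bf R})$ to a Čech cocycle, apply the construction of the preceding paragraph to obtain a continuous closed 1-form $\omega$, and then verify, directly from the definition of the integral $\int_\gamma\omega$ and the homomorphism of periods, that $[\omega]=\xi$ in singular cohomology. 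This completes the realization of $\xi$ by a continuous closed 1-form.
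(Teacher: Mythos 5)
Your proposal is correct and follows essentially the same route as the paper: identify continuous closed $1$-forms with global sections of the quotient sheaf of continuous functions modulo locally constant functions, use the associated exact sequence together with the acyclicity of the sheaf of continuous functions on a paracompact Hausdorff space to get surjectivity onto $\check{H}^1(X;{\bf R})$, and then invoke the Čech--singular comparison isomorphism supplied by the HLC hypothesis. Your version merely fills in details the paper leaves implicit (the explicit partition-of-unity cochain and the softness argument).
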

\begin{proof}
We denote outline of the proof.\par
$ {\bf R}_{X} $ denotes the sheaf of locally constant function, $C_{X}$ denotes the sheaf of real value constant function, $ B_{X} $ denotes the sheaf of germ of constant function modulo locally constant function. The space of global sections of $B_{X}$, $H^{0} (X; B_{X})$ coincide with the space of continuous closed 1-form on $X$. We obtain the next sequence,
$$
\begin{CD}
0 @>>> H^{0}(X; {\bf R}) @>>> H^{0} (X; C_{X}) @>>> \\
 H^{0} (X; B_{X}) @>>> H^{1}(X; {\bf R}) @>[\: ]>> 0
\end{CD}
$$
The group $H^{1} (X;{\bf R})$ is the \'{C}ech cohomology $\check{H}^{1}(X;{\bf R})$ and the map [ ] assigns to a closed 1-from $\omega$ its cohomology class $[\omega ] \in \check{H}^{1} (X; {\bf R} )$. The natural map $\check{H}^{1} (X; {\bf R} ) \rightarrow H^{1} (X; {\bf R}) $ is an isomorphism. This implies our statement. \end{proof}\par

  As we have shown above, the space of continuous closed 1-forms $H^0 (X; B_{X})$ on a path-coccected topological space $X$ can be described by the following short exact sequence:
  
 \begin{eqnarray}
 0 \rightarrow C(X) / {\bf R} \rightarrow H^0(X; B_{X}) \rightarrow \check{H} ^1 (X; {\bf R}) \rightarrow 0.
 \end{eqnarray}

\begin{cor}
A closed 1-form $\omega$ on $X$ equals $df$ for some continuous function $f: X \rightarrow {\bf R}$ if and only if the \'{C}ech cohomology class $[\omega] \in \check{H} ^1 (X; {\bf R})$ vanishes, $[\omega] =0$
\end{cor}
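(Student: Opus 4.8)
The plan is to deduce the corollary directly from the short exact sequence displayed just above it, by identifying the image of the injection $C(X)/{\bf R} \hookrightarrow H^0(X; B_X)$ with the set of 1-forms of the type $df$. First I would recall that, by the definition given in Section 2, a continuous function $f \colon X \to {\bf R}$ determines the continuous closed 1-form $df = \{f\}_{\{X\}}$ associated with the trivial open cover $\{X\}$, and that this is exactly the image of $f$ under the map $C(X) \to H^0(X; B_X)$ underlying that injection. Hence the image of $C(X)/{\bf R}$ in $H^0(X; B_X)$ is precisely $\{\, df : f \in C(X) \,\}$.

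Next I would invoke exactness of the sequence at the middle term $H^0(X; B_X)$. Since the right-hand arrow $[\:] \colon H^0(X; B_X) \to \check{H}^1(X; {\bf R})$ sends a form to its \v{C}ech cohomology class, exactness says that its kernel equals the image of $C(X)/{\bf R}$. Combining this with the previous paragraph, $[\omega] = 0$ holds if and only if $\omega$ lies in that image, i.e.\ if and only if $\omega = df$ for some continuous $f \colon X \to {\bf R}$. This is exactly the assertion of the corollary.

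As an independent check I would verify consistency with the period description: by the earlier lemma, $\omega = df$ if and only if the homomorphism of periods of $\omega$ is trivial, while $[\omega] \in \check{H}^1(X; {\bf R})$ is by construction the homomorphism of periods viewed through the isomorphism $\check{H}^1(X; {\bf R}) \cong H^1(X; {\bf R}) = \operatorname{Hom}(H_1(X; {\bf R}), {\bf R})$; so $[\omega] = 0$ again corresponds exactly to $\omega = df$. The two routes agree, which is reassuring.

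The main point to watch, rather than a genuine obstacle, is that the real content has already been absorbed into the proof of the realization lemma: the exactness used here rests on the vanishing $H^1(X; C_X) = 0$, which holds because the sheaf $C_X$ of continuous functions is soft on the paracompact space $X$. Granting that exactness, the remaining verifications are routine: that the first map is literally $f \mapsto df$, that its kernel is the constants ${\bf R}$ (using that $df = 0$ precisely when $f$ is locally constant, together with connectedness of $X$), and that the two descriptions of $[\omega]$ coincide.
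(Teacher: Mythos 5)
Your proposal is correct and matches the paper's (implicit) argument: the corollary is stated immediately after the short exact sequence $0 \rightarrow C(X)/{\bf R} \rightarrow H^0(X;B_X) \rightarrow \check{H}^1(X;{\bf R}) \rightarrow 0$ precisely because exactness at the middle term is the whole proof, which is exactly the route you take. Your added consistency check via the homomorphism of periods is a sensible cross-reference to the earlier lemma but is not needed.
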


\section{Category with respect to a cohomology class}
In this section we denote a definition of  a category with respect to a cohomology class in \cite{far1}. A classical Lusternik-Schnirelaman category is a minimal number of open covers, and this category is a generalization of a classical one.
\subsection{Definition of category with respect to a cohomology class}
\begin{dfn}
Let $X$ be a finite CW-complex and let $\xi \in H^{1}(X; {\bf R})$ be a real cohomology class. We define $\operatorname{cat}(X,\xi ) $ to be the least integer k such that for any integer $N>0$ there exists an open cover 
\begin{eqnarray}
X=F \cup F_{1} \cup \cdots \cup F_{k},
\end{eqnarray}
such that\\
(a) Each inclusion $F_{j} \rightarrow X$ is null-homotopic, where j=1,...,k\\
(b) There exist a homotopy $h_{t} : F \rightarrow X$, where $t \in [0,1]$, such that $h_{0}$ is the inclusion $F \rightarrow X$ and for any point $x \in F$,\par
\begin{eqnarray}
\int _{\gamma _{x}} \omega \leq -N
\end{eqnarray}
where the curve $\gamma _{x} :[0,1] \rightarrow X$ is given by $\gamma _{x} (t) = h_{t} (x)$ and $\omega$ is a continuous closed 1-form representing the cohomology class $\xi$.
\end{dfn}
 $\operatorname{cat}(X, \xi )$ does not depend on the choice of the continuous closed 1-form $\omega$. Indeed, if $\omega '$ is another continuous closed 1-form representing $\xi$, then $\omega - \omega ' = df$, where $f:X \rightarrow {\bf R}$ is a continuous function, and any continuous curve  $\gamma : [0,1] \rightarrow X$,
 \begin{eqnarray}
|\int_{\gamma} \omega - \int_{\gamma} \omega ' | =  |f(\gamma (1) ) - f(\gamma (0))| \leq C
 \end{eqnarray}
where the constant $C$ is independent of $\gamma$. Here we have used the compactness of $X$. This shows that if one can construct open covers such that (b) is satisfied with arbitrary large $N>0$ then the same is true with $\omega '$ replacing $\omega $.\\

\begin{remark}

 In general the inequality  
\begin{eqnarray}
\operatorname{cat}(X,\xi ) \leq \operatorname{cat}(X)
\end{eqnarray}
 holds. We may always consider covers with $F$ empty. Here $\operatorname{cat}(X)$ denotes the classical Lusternik-Schnierelman category of $X$.\par
 It cam be also proved that
 \begin{eqnarray}
 \operatorname{cat}(X,\xi ) \leq \operatorname{cat}(X)
 \end{eqnarray}
  for connected $X$ and nonzero $\xi \in H^1 (X ; {\bf R})$. This follows because for any $N>0$ we can take an open set $F$ that is contractible to a point in $X$ as the (b) of the definition. So if $X=G_{1}\cup \cdots G_r $ is a categorical open cover, we may construct an open cover that $F=G_1$ and $F_j = G_{j+1} $ for $j=1,...,r-1$, which satisfies the definition.\par
 Observe also that $\operatorname{cat}(X, \xi ) = \operatorname{cat}(X, \lambda \xi )$, for $\lambda \in {\bf R}, \lambda >0$. It is clearly from the definition.
 \end{remark}
 \subsection{Another definition}
 Sometimes it is more convinient to use a different version of condition (b) of Definition. Let $p: \tilde{X} \rightarrow X$ be the normal covering correspoding to the kernel of the homomorphism of periods. The  group $\Gamma$ of covering transformation of this covering equals the image of the homomorphism of periods. The induced closed 1-form $p^{*} \omega$ equals $df$ where $f: \tilde{X} \rightarrow {\bf R}$ is a continuous function. Denote $p^{-1} (F)$ by $F$, it is an open subset invariant under $\Gamma$.Condition(b) is equivalent to\par
 (b') There exists a homotopy $\tilde{h}_{t} : \tilde{F} \rightarrow \tilde{X}$, where $t \in [0,1]$, such that $\tilde{h}_{0}$ is the inclusion $\tilde{F} \rightarrow \tilde{X}$, each $\tilde{h}_{t}$ is $\Gamma$-equivalent, for any point $x\in \tilde{F}$,
 \begin{eqnarray}
 f(\tilde{h}_{1} (x) ) - f(x) \leq -N
 \end{eqnarray}
  The homotpy $\tilde{h}_{t}$ is a lift of the $h_{t}$, which exists because of the homotopy lifting property of coverings. If $\tilde{\gamma}_{y}$, where $y \in \tilde{F}$ and $p(y)=x$, denotes the path $\tilde{\gamma}_{y} (t) = \tilde{h}_{t} (y)$ in $\tilde{X}$, then $\gamma_ {x} = p_{*} \tilde{\gamma}_{y}$ and
 \begin{eqnarray}
 \int_{\gamma _{x} } \omega =\int _{\tilde{\gamma }_{y} } df = f(\tilde{h}_{1} (y)) - f(x)
 \end{eqnarray}
 This shows the equivalence (b) and (b').
 
 \subsection{Homotopy invariance}
\begin{lem}
 Let $X_{1}$ and $X_{2}$ be finite CW-complexes. Let $\phi : X_{1} \rightarrow X_{2}$ be a homotopy equivalence, $\xi_{2} \in H^{1} (X_{2} ;{\bf R})$, and $\xi _{1} = \phi ^{*} (\xi _{2}) \in H^{1} (X_{1} ; {\bf R})$. Then
\begin{eqnarray}
\operatorname{cat} (X_{1} , \xi _{1}) = \operatorname{cat} (X_{2}, \xi _{2}).
\end{eqnarray}
\end{lem}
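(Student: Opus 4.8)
The plan is to prove the two inequalities $\operatorname{cat}(X_2,\xi_2)\le\operatorname{cat}(X_1,\xi_1)$ and $\operatorname{cat}(X_1,\xi_1)\le\operatorname{cat}(X_2,\xi_2)$ separately and to observe that the situation is symmetric, so that one of them suffices. Let $\psi:X_2\to X_1$ be a homotopy inverse of $\phi$, so $\phi\psi\simeq\operatorname{id}_{X_2}$ and $\psi\phi\simeq\operatorname{id}_{X_1}$. On cohomology $\psi^{*}\xi_1=\psi^{*}\phi^{*}\xi_2=(\phi\psi)^{*}\xi_2=\xi_2$, so the hypotheses are preserved when the roles of $(\phi,X_1,\xi_1)$ and $(\psi,X_2,\xi_2)$ are interchanged; hence it is enough to prove $\operatorname{cat}(X_2,\xi_2)\le k$, where $k=\operatorname{cat}(X_1,\xi_1)$.

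First I would fix representatives compatibly: choose a continuous closed 1-form $\omega_2$ representing $\xi_2$ and set $\omega_1=\phi^{*}\omega_2$, which represents $\xi_1=\phi^{*}\xi_2$. Given $N'>0$, I want an open cover of $X_2$ satisfying (a) and (b) with bound $N'$. Starting from a cover $X_1=F\cup F_1\cup\cdots\cup F_k$ realizing $k=\operatorname{cat}(X_1,\xi_1)$ for a large $N$ fixed below, I transport it to $X_2$ by setting $G=\psi^{-1}(F)$ and $G_j=\psi^{-1}(F_j)$. These are open and cover $X_2$ since $\psi$ is continuous and the $F$'s cover $X_1$. For condition (a), the inclusion $G_j\hookrightarrow X_2$ is homotopic to $\phi\psi|_{G_j}$; since $\psi(G_j)\subseteq F_j$, the map $\psi|_{G_j}$ factors through $F_j\hookrightarrow X_1$, which is null-homotopic, so $\phi\psi|_{G_j}$, and hence the inclusion, is null-homotopic.

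The heart of the argument is condition (b). On $G$ I would concatenate two homotopies: first the restriction to $G$ of a fixed homotopy $\Phi_s:X_2\to X_2$ with $\Phi_0=\operatorname{id}_{X_2}$ and $\Phi_1=\phi\psi$, and then the homotopy $\phi\circ h_s\circ\psi|_{G}:G\to X_2$, where $h_s$ is the homotopy on $F$ provided by (b) on $X_1$. The two pieces agree at the junction because $h_0$ is the inclusion $F\hookrightarrow X_1$, and the resulting homotopy $H_t$ begins at the inclusion $G\hookrightarrow X_2$, as required. For $x\in G$ with $y=\psi(x)\in F$, writing $\delta_y(s)=h_s(y)$, the second piece is $\phi\circ\delta_y$, and by the pullback identity $\int_{\phi\circ\delta_y}\omega_2=\int_{\delta_y}\phi^{*}\omega_2=\int_{\delta_y}\omega_1$ (immediate from the definitions of pullback and of the integral) it contributes $\int_{\delta_y}\omega_1\le -N$ by (b) on $X_1$.

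I expect the main obstacle to be controlling the first piece uniformly. Writing $c_x(s)=\Phi_s(x)$, this is a fixed family of curves independent of the chosen cover, and I would show that the function $x\mapsto\int_{c_x}\omega_2$ is continuous on the compact complex $X_2$ — locally it is a finite sum of differences of the defining functions $f_U$ of $\omega_2$ evaluated along a subdivision valid on a whole neighborhood of $x$ — hence bounded by a constant $C_1$ depending only on $\Phi$ and $\omega_2$, not on the cover or on $N$. Then $\int_{\gamma_x}\omega_2\le C_1-N$, and choosing $N=N'+C_1$ at the outset gives $\int_{\gamma_x}\omega_2\le -N'$. Thus $X_2=G\cup G_1\cup\cdots\cup G_k$ satisfies (a) and (b) for arbitrary $N'>0$, proving $\operatorname{cat}(X_2,\xi_2)\le k$; the symmetric argument applied to $\psi$ yields the reverse inequality, and hence equality.
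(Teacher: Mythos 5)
Your proposal is correct and takes essentially the same approach as the paper: transport the categorical cover through the homotopy equivalence by taking preimages, prepend the homotopy from the identity to the composite $\phi\psi$ to fix up condition (b), and absorb the resulting uniformly bounded correction (your $C_1$, the paper's $C$, obtained by compactness) into the constant $N$. The only differences are cosmetic --- you prove $\operatorname{cat}(X_2,\xi_2)\le\operatorname{cat}(X_1,\xi_1)$ first where the paper's outline does the mirror-image inequality, and you supply the details of (a) and (b) that the paper's outline leaves implicit.
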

 \begin{proof}
 We denote an outline of proof.\par
 Let $\psi : X_{2} \rightarrow X_{1}$ be a homotopy inverse of $\phi$. Choose a closed 1-form $\omega_{1}$ on $X_{1}$. Then $\omega_{2} = \psi ^{*} \omega _{1}$ is a closed 1-form on $X_{2}$ lying in the cohomology class $\xi _{2}$.\par
  Fix a homotopy $r_{t} : X_{1} \rightarrow X_{2}$, where $t \in [0,1]$, such that $r_{0} = \verb|id|_{X_{1} } $ and $r_{1} = \psi \circ \phi$. Compactness of $X_{1}$ implies that there is a constant $C>0$ such that $|\int_{\alpha _{x}} \omega _{1}| < C$ for any point $x\in X_{1}$, where $\alpha_{x} (t)=r_{t} (x) $, where $t\in [0,1]$.\par
  Suppose that $\operatorname{cat}(X_{2}, \xi _{2} ) \leq k$. Given any $N >0$, there is an open covering $X_{2} = F \cup F_{1} \cup \cdots \cup F_{k}$, such that $F_{1},...,F_{k}$ are contractible in $X_{2}$ and there exists a homotopy $h_{t} :F \rightarrow X_{2}$, where $t \in [0,1]$, such that $\int _{\gamma_{x}} \omega_{2} \leq -N-C$ for any $x \in F$, where $\gamma _{x} (t) = h_{t} (x)$. Define
\begin{eqnarray}
G= \phi ^{-1} (F), G_{j} = \phi ^{-1} (F_{j}),j=1,...,k
\end{eqnarray}
 These sets form an open cover of $X$, $X=G\cup G_{1} \cup \cdots \cup G_{k}$. This cover satisfy the definition of category. The argument proves that $\operatorname{cat} (X_{1} , \xi _{1})  \leq \operatorname{cat}(X_{2} , \xi _{2})$.\par
 The inverse inequality follows similarly.
 \end{proof}
\section{The Estimate of the number of zeros} 
\subsection{Zero point of a sooth closed 1-form}
For a classical Lusternik-Schnirelmann category, the category on a manifold gives a lower bound of any smooth function. But the Lusternik-Schnirelmann type theory for closed 1-forms is very deferent from both the classical Lusternik-Schnirelmann theory for smooth functions and from the Novikov theory. Because in any non-zero cohomology class $\xi \in H^{1} (X ;{\bf Z})$, one can always find a reperesenting closed 1-form having at of one zero[cf. \cite{far1}]. Hence quantitative estimates for the number of zeros can be obtained only under some dynamical properties of garadient-like vector fields for a given closed 1-form.\par
 Let $v$ be a smooth vector field on a closed smooth manifold $M$. Recall that a homoclinic orbit of $v$ is defined as an integral trajectory $\gamma (t)$,
$$
d\gamma (t) / dt = v(\gamma (t)), t\in {\bf R}
$$
such that the limits $\lim _{t \rightarrow + \infty} \gamma (t)$ and $\lim _{t \rightarrow - \infty} \gamma (t)$ both exist and are equal:
$$
\lim _{t \rightarrow + \infty} \gamma (t) =\lim _{t \rightarrow - \infty} \gamma (t)
$$
 More generally, a homoclinic cycle of length of n is a sequence of integral trajectories $\gamma _{1} (t), \gamma _{2} , ... ,\gamma _{n} (t)$ of $v$ such that,
 \begin{eqnarray}
 \lim_{t \rightarrow + \infty} \gamma _{i} (t) = p_{i+1} = \lim_{t \rightarrow - \infty } \gamma _{i+1} (t)
 \end{eqnarray}
for $i=1,...,n-1$ and
\begin{eqnarray}
 \lim_{t \rightarrow + \infty} \gamma _{n} (t) = p_{i+1} = \lim_{t \rightarrow - \infty } \gamma _{1} (t).
\end{eqnarray}
\begin{figure}[htbp]
  \begin{tabular}{cc}
  \begin{minipage}{0.4\hsize}
  \begin{center}
 \includegraphics[width=5cm, bb=0 0 200 200]{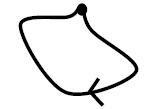}
\caption{Homoclinic orbit}
\end{center}
\end{minipage}
 \begin{minipage}{0.4\hsize}
  \begin{center}
 \includegraphics[width=5cm, bb=0 0 300 300]{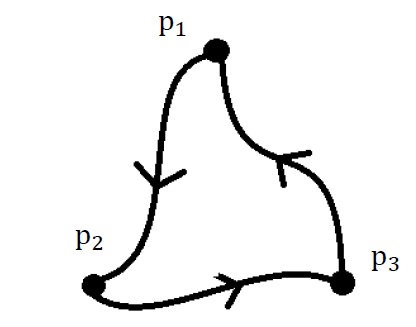}
\caption{Homoclinic cycle}
\end{center}
\end{minipage}
\end{tabular}
\end{figure}
 There are many results about the existence of homoclinic orbits in Hamiltonian systems. For obvious reasons homoclinic cycles is not appeared in dynamical systems of gradient-like vector fields of functions corresponding to the classical Lusternik-Schnirelmann category.
 \begin{thm}[M.Farber, \cite{far1} \cite{far2}]
 Let $\omega$ be a smooth closed 1-form on a closed smooth manifold $M$, and $\omega$ represents $\xi \in H^{1} (M; {\bf R})$ . If $\omega$ admits a gradient-like vector field $v$ with no homoclinic cycles, then $\omega$ has at least $\operatorname{cat} (M,\xi)$ distinct zeros.
 \end{thm}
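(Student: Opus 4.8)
The plan is to prove the contrapositive in the form of an upper bound: if $\omega$ has exactly the zeros $p_1,\dots,p_k$ (the fixed points of the gradient-like flow $\varphi^t$) and $\varphi$ has no homoclinic cycle, then $\operatorname{cat}(M,\xi)\le k$. By the definition of the category it suffices, for every integer $N>0$, to produce an open cover $M=F\cup F_1\cup\cdots\cup F_k$ in which each $F_j$ is contractible in $M$, so that condition (a) holds, and $F$ can be deformed so that $\int_{\gamma_x}\omega\le -N$ for all $x\in F$, which is condition (b). It is most convenient to work on the normal covering $p\colon\widetilde M\to M$ associated with the kernel of the period homomorphism, where $p^{*}\omega=df$ for a $\Gamma$-equivariant primitive $f\colon\widetilde M\to{\bf R}$; by condition (b') the estimate (b) is equivalent to $f(\tilde h_{1}(\tilde x))-f(\tilde x)\le -N$ for the lifted homotopy. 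Since $v$ is gradient-like for $\omega$, the primitive $f$ is \emph{strictly decreasing} along every nonconstant orbit of the lifted flow and is constant exactly on the lifted fixed-point set.

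First I would use the hypothesis of no homoclinic cycle to organize the dynamics. Absence of homoclinic cycles means there is no cyclic chain of heteroclinic trajectories returning to a fixed point; equivalently, the relation ``$p_i\succ p_j$ if some orbit runs from $p_i$ to $p_j$'' is acyclic. I would upgrade this to a filtration: construct flow-adapted neighborhoods $V_1,\dots,V_k$ of the $p_j$, small enough that each $V_j$ deformation retracts onto $p_j$ along the flow and is therefore contractible, together with a uniform escape statement on the complement. Concretely, the aim of this step is that for suitable neighborhoods $V_j'\subset V_j$ there is a time $T=T(N)$ such that every orbit starting in $M\setminus\bigcup_j V_j'$ either enters some $V_j$ or has its value of $f$ drop by at least $N$ within time $T$, with no orbit lingering indefinitely in the region between the $V_j$. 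This is exactly where acyclicity is indispensable: it forbids the recurrent behaviour that would otherwise allow orbits to dwell near the fixed-point set for unbounded time and thereby defeat any uniform choice of $T$.

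Next I would assemble the cover. Set $F_j=V_j$, which is contractible in $M$, giving condition (a). Let $F$ be a slight open thickening of $M\setminus\bigcup_j V_j'$, so that $M=F\cup F_1\cup\cdots\cup F_k$ is an open cover. Define the deformation of $F$ by the flow, $h_t(x)=\varphi^{tT}(x)$, whose lift satisfies $\int_{\gamma_x}\omega=f(\tilde h_1(\tilde x))-f(\tilde x)$. For $x\in F$ two cases occur. If the orbit escapes, the uniform bound from the previous step gives $\int_{\gamma_x}\omega\le -N$ directly. If instead the orbit enters some $V_j$, I would flow it into $V_j$ and then continue the homotopy inside $V_j$, using that $V_j$ retracts to $p_j$ and that $f$ keeps decreasing, again arranging the total drop of $f$ to be at least $N$ by enlarging $T$. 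In either case condition (b) holds, so $\operatorname{cat}(M,\xi)\le k$ and the theorem follows.

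The main obstacle is precisely the uniform escape of the second step, and in particular the interaction between the flow and the stable sets of the $p_j$. A point lying just off a stable set lingers near a fixed point for arbitrarily long before escaping, so no single $T$ controls it unless the neighborhoods $V_j$ are chosen to absorb exactly this slow behaviour while remaining contractible. Balancing these requirements --- the $V_j$ large enough to capture every long-dwelling orbit, small enough to retract onto $p_j$, and the complement swept uniformly to arbitrarily negative values of $f$ --- is the technical heart of the argument, and it is the no-homoclinic-cycle hypothesis that makes such a choice possible.
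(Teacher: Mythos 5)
Your overall strategy --- pass to the contrapositive, build the cover out of the flow, take neighborhoods of the zeros as the contractible pieces and sweep the rest down by the gradient-like dynamics --- is the same as the paper's (the paper proves the continuous version, Theorem 3, and the smooth statement by the identical argument). But the execution has a genuine gap at the decomposition step. You set $F_j=V_j$ and let $F$ be a thickening of $M\setminus\bigcup_j V_j'$, deformed by the flow for a \emph{uniform} time $T$, and for orbits that fall into some $V_j$ you propose to ``continue the homotopy inside $V_j$ \dots arranging the total drop of $f$ to be at least $N$ by enlarging $T$.'' This step fails: an orbit that converges to $p_j$ has $\int_{\gamma}\omega$ bounded below by $f(p_j)-f(x)$, a finite quantity independent of $N$, and no homotopy ending inside $V_j$ (let alone one retracting to $p_j$) can push the integral to $-N$. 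Such points --- everything on or near the stable set of $p_j$ --- cannot be placed in $F$. The correct decomposition, used in the paper, is to define $F$ as the set of points whose forward orbit actually achieves $\int_{\sigma_p}\omega=-N$ at some (point-dependent, continuously varying) time $t_p$, and $F_j$ as the set of points whose orbit enters $\operatorname{Int}V_j$ while the integral is still $>-N$; this $F_j$ is a flow-saturated basin, much larger than $V_j$, and its contractibility is the nontrivial point: one flows each point to its first entry time into $V_j$, and continuity of that entry time requires the ``gradient convex'' neighborhoods of Lemma 7, which you do not construct. Your own closing paragraph identifies the lingering-orbit problem, but the proposed cure --- enlarge $V_j$ to ``absorb'' the slow orbits --- would force $V_j$ to contain a neighborhood of the whole stable set, which need not be contractible; the actual cure is to abandon the uniform $T$ and enlarge $F_j$ instead.

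A second, related gap is the role you assign to the no-homoclinic-cycle hypothesis. It is not needed to prevent orbits from dwelling near the fixed-point set (they do dwell, arbitrarily long, near stable sets regardless), and the covering property $M=F\cup\bigcup_jF_j$ essentially comes for free from compactness and the Lyapunov property. Where the hypothesis enters in the paper is the choice of the $V_j$: one first shows there is a uniform $b>0$ such that every passage of an orbit from the exit set $\partial_-V_i$ into another $V_j$ costs at least $b$ in the integral, and then argues that if no sufficiently small $V_i$ could prevent orbits from leaving $V_i$ and returning to it within integral budget $N$, a limiting argument over shrinking neighborhoods would assemble these returns into a chain of at most $[N/b]$ heteroclinic orbits beginning and ending at $p_i$ --- a homoclinic cycle. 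Without this quantitative return-exclusion property the contraction of $F_j$ onto $V_j$ is not well defined and the whole cover collapses; your appeal to acyclicity of the relation $p_i\succ p_j$ gestures at the right phenomenon but does not supply the compactness argument that makes it usable.
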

 A slightly more informative statement was proved in follow theorem.
\begin{thm}[M.Farber, \cite{far1} \cite{far2}]\label{homoc}
Let $\omega$ be a smooth closed 1-form on a smooth closed manifold $M$ having fewer than $\operatorname{cat} (M, \xi)$ zeros, and $\omega$ represents cohomology class $\xi \in H^{1}(M, {\bf R})$. Then there exists an integer $N>0$ such that any gradient-like vector field $v$ for $\omega$ has a homoclinic cycle $\gamma_{1},..,\gamma_{n}$ satisfying
\begin{eqnarray}
\sum ^{n} _{i=1} \int _{\gamma _{i}} \omega \leq N.
\end{eqnarray}
\end{thm}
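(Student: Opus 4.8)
The plan is to argue by contradiction and to convert the absence of a cheap homoclinic cycle into a categorical open cover, thereby contradicting the hypothesis $k < \operatorname{cat}(M,\xi)$, where $p_1,\dots,p_k$ are the zeros of $\omega$. The hypothesis enters through the definition of $\operatorname{cat}(M,\xi)$: since $k$ is strictly smaller than the category, the value $k$ fails the defining property, so there is a fixed integer $N_0>0$ for which \emph{no} open cover $M=F\cup F_1\cup\cdots\cup F_k$ satisfies conditions (a) and (b) with this $N_0$. I would then set $N:=N_0+C$, where $C>0$ is a constant coming from the compactness of $M$ (of the kind used in the independence argument for $\operatorname{cat}(M,\xi)$), and claim that every gradient-like vector field $v$ for $\omega$ admits a homoclinic cycle of total cost at most $N$. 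Assume toward a contradiction that some $v$ has no homoclinic cycle $\gamma_1,\dots,\gamma_n$ with $\sum_i\int_{\gamma_i}\omega\le N$; the aim is to manufacture from $v$ a forbidden cover realizing $N_0$.

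The cover is built from the dynamics of $v$. Passing to the minimal covering $p:\tilde X\to X$ on which $p^*\omega=df$ for an honest function $f:\tilde X\to{\bf R}$, the zeros lift to the $\Gamma$-translates of critical points $\tilde p_1,\dots,\tilde p_k$, and the downward flow $\varphi^t$ of $-v$ strictly decreases $f$ off the critical set. I would choose small contractible neighborhoods $U_i\ni p_i$; since each $U_i$ is contractible, its inclusion into $M$ is null-homotopic, so the sets $F_i:=\overline{U_i}$ furnish the $k$ null-homotopic pieces required by (a). Taking $F:=M\setminus\bigcup_i U_i'$ for slightly smaller $U_i'\subset U_i$ yields an open cover $M=F\cup F_1\cup\cdots\cup F_k$, and it remains to deform $F$ downward: outside $\bigcup_i U_i$ one has $df(v)\ge\delta>0$, so along any orbit that avoids these neighborhoods $f$ decreases at a definite rate and a bounded flow time already produces a drop of $N_0$.

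The main obstacle is exactly the orbits that do \emph{not} avoid the neighborhoods of the zeros, i.e.\ trajectories captured by a critical point, whose total $f$-drop is therefore a priori bounded by $f(x)-f(p_i)<N_0$; for such a point one cannot reach the required drop by flowing alone, and the deformation must be continued by splicing. Once the orbit enters $U_i$ and comes close to $\tilde p_i$, one pushes it, using the contractibility of $U_i$ together with the local unstable behaviour of the flow, across $U_i$ to its exit side and resumes the descent. Tracking the accumulated $f$-drop through a finite cascade of such splices, each visiting some critical point, the decisive point is that the cascade can only fail to add further descent if it \emph{returns} to an already visited lift of a zero; but such a return is precisely a homoclinic cycle, whose accumulated cost is the $\xi$-period collected along the cascade. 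Since no homoclinic cycle of cost $\le N$ exists by assumption, the cascade cannot close up before the accumulated drop surpasses $N_0$ (the slack $C$ absorbs the bounded error contributed by the finitely many splices through the small neighborhoods). Hence every $x\in F$ reaches a drop $\ge N_0$ in finite, continuously varying time, and the resulting homotopy $h_t:F\to M$ satisfies $\int_{\gamma_x}\omega\le -N_0$ for all $x\in F$.

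This produces an open cover with $k$ sets satisfying (a) and (b) for the value $N_0$, contradicting the choice of $N_0$; therefore the assumed $v$ must in fact possess a homoclinic cycle with $\sum_i\int_{\gamma_i}\omega\le N$. Since $N=N_0+C$ depends only on $M$, $\omega$ and $\xi$ and not on the particular gradient-like field, the theorem follows. I expect the continuity and well-definedness of the splicing deformation near the zeros — controlling it uniformly in $x$ and verifying that the cumulative per-splice error stays within the budget $C$ — to be the genuinely technical heart of the argument, the reduction to the definition of $\operatorname{cat}(M,\xi)$ being comparatively formal.
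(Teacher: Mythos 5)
Your global strategy --- negate the conclusion, extract from the definition of $\operatorname{cat}(M,\xi)$ a single bad constant $N_0$ for which no $k$-set cover exists, and then use a gradient-like field with no cheap homoclinic cycle to manufacture exactly such a cover --- is the right one, and it is the strategy the paper uses (it proves the continuous analogue, Theorem \ref{main}, and treats the present statement as following by the same argument). But the cover you build is the wrong one, and the gap this creates is not the deferred ``technical heart''; it is a structural failure. You take the categorical pieces to be small neighborhoods $\overline{U_i}$ of the zeros and then try to deform all of $F=M\setminus\bigcup_i U_i'$ downward, ``splicing'' a trajectory across $U_i$ whenever it is captured by $p_i$, using contractibility of $U_i$ and the ``local unstable behaviour of the flow''. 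This cannot be done. A zero of a gradient-like field need have no unstable set at all (e.g.\ a local minimum of the lifted primitive $f$ on $\tilde X$), so there is no exit side to splice to; and at a saddle-type zero the exit branch depends discontinuously on the incoming trajectory, so no single continuous homotopy $h_t:F\to M$ with $\int_{\gamma_x}\omega\le -N_0$ for every $x\in F$ can be produced this way. Condition (b) demands a genuine homotopy, and the set of captured points is precisely where your construction breaks. (Minor additional point: your $F_i=\overline{U_i}$ are closed, while the definition calls for an open cover.)

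The correct construction avoids splicing entirely by choosing the decomposition differently. One first shrinks each $U_i$ to a ``gradient convex'' set $V_i$ (Lemma \ref{g-clem}) with the further property that no orbit leaves $V_i$ and returns to its interior with $\omega$-drop $\le -N$; establishing this property is where the homoclinic cycle of bounded total period actually appears, via a compactness and limiting cascade whose length is bounded by $[N/b]$, with $b>0$ the minimal drop between distinct $V_i$ and $V_j$ --- and that bound is exactly what gives the quantitative estimate $\sum_i\int_{\gamma_i}\omega\le N$ in the statement. Then $F_j$ is defined not as a neighborhood of $p_j$ but as the set of \emph{all} points whose forward orbit enters $\operatorname{Int}V_j$ before accumulating drop $-N$; such an $F_j$ is contractible because the flow itself pushes it into the contractible set $V_j$, while $F$ is the set of points that achieve drop $-N$ by flowing alone, so its deformation is just the flow and no surgery at the zeros is ever needed. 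To repair your write-up, replace the splicing step by this redefinition of the $F_j$; as it stands, the proposal does not close.
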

This shows that there always exist homoclinic cycles which cannot be destroyed while perturbing the gradient-like vector field. It is a new phenomenon, not occurring in Novikov theory.\par
These proves is in \cite{far1}, but it can be proved similarly as continuous versions in following sections.
\begin{cor}
The zeros of a smooth closed 1-form $\omega$ is all Morse type, then $\omega$ has at least $\operatorname{cat} (M,\xi)$ zeros. 
\end{cor}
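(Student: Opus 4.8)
The plan is to derive the Corollary from the first of the two theorems of Farber stated above --- the one asserting that if $\omega$ admits a gradient-like field with \emph{no} homoclinic cycle then $\omega$ has at least $\operatorname{cat}(M,\xi)$ zeros. Thus everything reduces to one claim: a smooth closed $1$-form all of whose zeros are nondegenerate admits at least one gradient-like vector field whose flow has no homoclinic cycle. The conceptual point is that the usual ``energy decreases along trajectories'' argument, which rules out homoclinic cycles for honest Morse functions, \emph{fails} here: on the covering $p:\tilde M \to M$ the primitive $f$ with $p^{*}\omega = df$ is multivalued, its periods may be negative, and so a cyclic chain of zeros along which $f$ strictly decreases is not excluded. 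I would therefore replace the energy by the \emph{Morse index}, which is an honest ${\bf Z}$-valued invariant of each zero and carries no period ambiguity.

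Concretely, I would first fix any gradient-like field $v$ for $\omega$. Since every zero is nondegenerate, $v$ has the standard Morse normal form near each zero, while away from the zeros the defining inequality for a gradient-like field is an open condition. Hence a $C^{\infty}$-small perturbation of $v$, supported off a neighborhood of the (finitely many, by compactness of $M$) zeros, keeps $v$ gradient-like for the \emph{same} form $\omega$ while achieving the Morse--Smale transversality condition: $W^{u}(p) \pitchfork W^{s}(q)$ in $M$ for every pair of zeros $p,q$. This is the Kupka--Smale genericity statement specialized to the class of gradient-like fields.

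For such a $v$ I would then rule out homoclinic cycles by a dimension count. Any nonconstant connecting trajectory from a zero $p$ to a zero $q$ lies in $W^{u}(p) \cap W^{s}(q)$, which is transverse and hence of dimension $\operatorname{ind}(p) - \operatorname{ind}(q)$; being positive-dimensional it forces $\operatorname{ind}(p) > \operatorname{ind}(q)$, so in particular no homoclinic orbit (the case $p = q$) can occur. A homoclinic cycle $p_{1} \to p_{2} \to \cdots \to p_{n} \to p_{1}$ would then give $\operatorname{ind}(p_{1}) > \operatorname{ind}(p_{2}) > \cdots > \operatorname{ind}(p_{n}) > \operatorname{ind}(p_{1})$, which is absurd. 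Feeding this $v$ into Farber's theorem yields at least $\operatorname{cat}(M,\xi)$ zeros. One may equally argue by contradiction through Theorem \ref{homoc}: were there fewer than $\operatorname{cat}(M,\xi)$ zeros, every gradient-like field --- including the Morse--Smale one just constructed --- would be forced to carry a homoclinic cycle.

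I expect the real obstacle to be the transversality step rather than the index count. One must check that the perturbation can simultaneously preserve the gradient-like property and the local normal forms, and one should keep in mind that for a closed $1$-form the flow may contain recurrent pieces, such as closed orbits of positive period, which are not connecting orbits of zeros; these do not violate the definition of a homoclinic cycle, but they are precisely why the energy argument is unavailable and the index argument is the correct substitute.
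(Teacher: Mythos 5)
Your proposal is correct and follows essentially the same route as the paper, which (in the remark immediately after the Corollary) invokes the Kupka--Smale theorem to produce a gradient-like vector field whose connecting trajectories go from higher to lower Morse index, hence admit no homoclinic cycles, and then applies Farber's first theorem. Your version merely spells out the transversality perturbation and the dimension count $\dim\bigl(W^{u}(p)\cap W^{s}(q)\bigr)=\operatorname{ind}(p)-\operatorname{ind}(q)$ that the paper leaves implicit.
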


 By the Kupka-Smale theorem \cite{ks} it is always possible to find a gradient-like vector field for $\omega$ such that any integral trajectory connecting two zeros comes out of a zero with higher Morse index and goes into a zero with lower Morse index. Such vector field has no homoclinic cycles.

\subsection{The zero points of a continuous closed 1-from}
 Let us consider the continuous version of the previous theorem. But we need define the zero point of a continuous closed 1-form. It is defined by using a gradient-like flows. Firstly let us recall the definition of continuous flow.
\begin{dfn}
The continuous function $\varphi : X \times {\bf R} \rightarrow X$ is said to be a continuous flow if it satisfies following:\\
(1) $\varphi (x,0) =x $, for any $x\in X$\\
(2) $\varphi (\varphi (x ,s ) , t ) = \varphi (x, s+t)$ for any $x\in X$ and for any $s,t \in {\bf R}$ 
\end{dfn} 
 $\varphi (x,t)$ is often denoted by $\varphi ^{t} (x)$ or $x \cdot t$.
\begin{dfn}
Let $X$ be a CW-complex, and let $\omega = \{ f_{U} \} _{U \in \mathcal{U} }$ be a continuous closed 1-form on $X$. The gradient-like flow of $\omega$ is defined as a continuous flow $\varphi ^{t}$ on $X$ such that on $U-(\operatorname{Fix}(\varphi ^{t} )\cap U )$, $f_{U}$ is a strictly Lyapnouv function, i.e. $f_{U}$ is strictly decreasing along a flow.
\end{dfn}
 We recall the definition of a zero-point of continuous closed 1-form.
\begin{dfn}
A point $p\in X$ is said to be zero point of $\omega$ if for any gradient-like flow of $\omega$, $p$ is a fixed point.
\end{dfn}
 Let us compare the definition with the zero points of smooth closed 1-form.
 \begin{lem}
Let $\omega$ be a smooth closed 1-form on a smooth Riemann compact manifold $M$ and let $p\in M$ is not zero point of $\omega$ in a sense of the smooth definition. Then $p$ is not a zero-point as a meaning of the continuous version.
 \end{lem}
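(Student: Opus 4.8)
The plan is to exhibit one gradient-like flow of $\omega$ for which $p$ fails to be a fixed point; by the definition of a continuous zero-point this is exactly what is needed to conclude that $p$ is not a zero-point in the continuous sense. The construction is the classical negative gradient flow furnished by the Riemannian metric. Since $p$ is not a zero in the smooth sense, the covector $\omega_{p} \in T^{*}_{p} M$ is nonzero. Let $g$ be the Riemannian metric and let $v$ be the metric dual of $\omega$, that is, the smooth vector field determined by $g(v, \cdot ) = \omega$. Because $g$ is nondegenerate, the zero set of $v$ coincides with the set of smooth zeros of $\omega$; in particular $v_{p} \neq 0$.

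First I would take $\varphi^{t}$ to be the flow of the vector field $-v$. As $M$ is compact, $-v$ is complete, so $\varphi : M \times {\bf R} \rightarrow M$ is defined for all time and is a continuous (indeed smooth) flow in the sense of the definition. Next I would verify that $\varphi$ is a gradient-like flow of $\omega$. On any open set $U$ of the defining cover we have $\omega = df_{U}$, and along the flow
\[
\frac{d}{dt} f_{U}(\varphi^{t}(x)) = df_{U}(-v) = \omega(-v) = -g(v,v).
\]
This quantity is $\leq 0$ everywhere and vanishes exactly at the zeros of $v$. Hence $\operatorname{Fix}(\varphi^{t})$ equals the zero set of $v$, and on $U - (\operatorname{Fix}(\varphi^{t}) \cap U)$ the function $f_{U}$ is strictly decreasing along $\varphi$, so $f_{U}$ is a strict Lyapunov function there. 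Thus $\varphi$ satisfies the definition of a gradient-like flow of $\omega$.

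Finally, since $v_{p} \neq 0$ we have $p \notin \operatorname{Fix}(\varphi^{t})$, so $p$ is not a fixed point of this gradient-like flow, and therefore $p$ is not a continuous zero-point, as claimed. The only steps requiring care are the two used above: that the negative gradient flow meets the Lyapunov condition off its fixed set, and that its fixed set is precisely the smooth zero set of $\omega$. Both follow at once from the nondegeneracy of $g$ and the displayed derivative computation, so I do not expect a serious obstacle; the substance of the lemma lies entirely in recognizing that the metric gradient flow is an admissible gradient-like flow that moves every smooth non-zero of $\omega$.
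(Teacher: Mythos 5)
Your proposal is correct and takes essentially the same route as the paper: the paper's (one-line) proof also invokes the existence, via compactness, of a gradient-like flow whose fixed points are exactly the smooth zeros of $\omega$, and you have simply written out the standard construction (the flow of the metric-dual negative gradient field) and the Lyapunov verification in full detail. No gap.
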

\begin{proof}
 Since $M$ is compact manifold, there is a gradient-like flow for $\omega$. In this flow all fixed point is a zero point of $\omega$.
 \end{proof}
 
\begin{lem}
Let $f:X \rightarrow {\bf R}$ be a continuous function. If $p \in X$ is a maximum point of $f$, then $p$ is a zero point of $df$.
\end{lem}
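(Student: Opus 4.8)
The plan is to argue by contradiction, and the essential trick is to run the flow in \emph{backward} time. First I would unwind the definitions for the special case $\omega = df$: here the defining open cover is the single set $\{X\}$ with associated function $f$, so a gradient-like flow $\varphi^{t}$ of $df$ is precisely a continuous flow along which $f$ is strictly decreasing at every point of $X - \operatorname{Fix}(\varphi^{t})$. To prove that $p$ is a zero point of $df$, I must show that $p \in \operatorname{Fix}(\varphi^{t})$ for an \emph{arbitrary} gradient-like flow $\varphi^{t}$ of $df$, so I would fix such a flow at the outset.

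Assume, for contradiction, that $p \notin \operatorname{Fix}(\varphi^{t})$. The first step is a preliminary observation that the whole orbit $\{\varphi^{t}(p) : t \in {\bf R}\}$ then avoids the fixed-point set: if $\varphi^{t_{0}}(p)$ were fixed for some $t_{0}$, then applying $\varphi^{-t_{0}}$ together with the group law $\varphi^{-t_{0}}(\varphi^{t_{0}}(p)) = \varphi^{0}(p) = p$ would force $p$ itself to be fixed, contrary to our assumption. Hence the strict-decrease condition of the definition applies along the entire orbit of $p$, so the map $t \mapsto f(\varphi^{t}(p))$ is strictly decreasing on all of ${\bf R}$. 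The second step is then to choose any $t_{0} < 0$, which yields $f(\varphi^{t_{0}}(p)) > f(\varphi^{0}(p)) = f(p)$. This exhibits a point of $X$ whose $f$-value strictly exceeds $f(p)$, contradicting the hypothesis that $p$ is a maximum point of $f$. Therefore $p$ must be a fixed point of $\varphi^{t}$, and since $\varphi$ was arbitrary, $p$ is a zero point of $df$.

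I expect the argument to be elementary, so the only point demanding care is choosing the correct time direction. Running the flow \emph{forward} from $p$ gives $f(\varphi^{t}(p)) < f(p)$ for $t>0$, which is perfectly consistent with $p$ being a maximum and produces no contradiction; it is the backward trajectory that manufactures a value larger than the supposed maximum. The orbit-invariance observation above is what guarantees that the strict-decrease hypothesis is legitimately available in backward time, and is thus the technical hinge of the proof.
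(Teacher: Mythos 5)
Your proof is correct and takes essentially the same approach as the paper: assume $p$ is not fixed and flow \emph{backward} in time to produce a point with strictly larger $f$-value, contradicting maximality. The paper phrases this with a small $\epsilon<0$ inside a neighborhood $U$ of $p$ (so it really only uses local maximality), and your preliminary observation that the whole orbit avoids $\operatorname{Fix}(\varphi^{t})$ is a harmless extra precaution making the strict-decrease hypothesis explicitly applicable.
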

 \begin{proof}
 Let $\varphi$ be a gradient-like flow of a continuous function $f$, and let $U \subset X$ be a open set include $p$ where $f(p)$ is maximal value of a function $f$ in $U$. Assume that $p\in X$ is not a fixed point of the flow $\varphi$.Then for small $\epsilon <0$,
$$
p \cdot \epsilon \in U, p \cdot \epsilon \neq p
$$
and, 
$$
f(p \cdot \epsilon) > f(p).
$$
This is contradict.
\end{proof}

The case that $p\in X$ is a minimum point is proved same as a maximum version. 
\begin{lem}
 Let $M^{n}$ be a smooth manifold and let $f: M \rightarrow {\bf R}$ be a morse function. Let $p\in M$ be a morse critcal point of $f$, then $p$ is a zero point of $df$ as continuous closed 1-form.
\end{lem}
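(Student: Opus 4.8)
The plan is to combine the Morse Lemma with the local structure theory of continuous flows (the tubular flow, or ``flow box'', theorem). Let $\lambda$ be the Morse index of $p$ and set $c=f(p)$. By the Morse Lemma I may fix coordinates $(x_1,\dots,x_n)$ near $p$, with $p$ at the origin, in which
\[
f = c + Q(x), \qquad Q(x) = -(x_1^2+\cdots+x_\lambda^2)+(x_{\lambda+1}^2+\cdots+x_n^2).
\]
If $\lambda=0$ then $p$ is a local minimum and if $\lambda=n$ then $p$ is a local maximum, so in these two cases $p$ is a zero point of $df$ by the previous Lemma (and its minimum analogue). Hence I would assume $1\le\lambda\le n-1$ (so $n\ge2$) and show that $p$ is a fixed point of every gradient-like flow $\varphi$ of $df$.

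First I would argue by contradiction: suppose $p\notin\operatorname{Fix}(\varphi)$ for some gradient-like flow $\varphi$. Since $\operatorname{Fix}(\varphi)$ is closed and $f$ strictly decreases along every non-fixed orbit, I may choose a neighborhood $V$ of $p$ containing no fixed points, so that $f$ strictly decreases along each orbit meeting $V$. As $p$ is a regular (non-rest) point of the continuous flow $\varphi$ and $M$ is locally compact, the tubular flow theorem provides a local cross-section through $p$ and a homeomorphism $h\colon(-\delta,\delta)\times\Sigma'\to V'\subset V$ with $\Sigma'$ a small $(n-1)$-cell, $h(0,\sigma_0)=p$, and $h(t,\sigma)=\varphi^{t}(\theta(\sigma))$ for a parametrization $\theta$ of the section, conjugating $\varphi$ to the translation flow.

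Next I would transport $f$ through this chart. The function $\tilde f(t,\sigma)=f(h(t,\sigma))$ is strictly decreasing in $t$ for each fixed $\sigma$, since $t\mapsto h(t,\sigma)$ is a $\varphi$-orbit. By strict monotonicity and continuity, for every $\sigma$ near $\sigma_0$ there is a unique $t=\tau(\sigma)$ with $\tilde f(\tau(\sigma),\sigma)=c$, and $\tau$ is continuous by the intermediate value theorem together with monotonicity. Therefore
\[
\{f=c\}\cap V' = \{\, h(\tau(\sigma),\sigma) : \sigma\in\Sigma' \,\}
\]
is the graph of $\tau$, hence homeomorphic to the $(n-1)$-cell $\Sigma'$; in particular $\{f=c\}$ is a topological $(n-1)$-manifold near $p$. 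I would then contradict this using the normal form: there $\{f=c\}$ is the zero set of the nondegenerate indefinite form $Q$, i.e.\ the cone on $\{Q=0\}\cap S^{n-1}\cong S^{\lambda-1}\times S^{n-\lambda-1}$. A cone on $Y$ is a topological $(n-1)$-manifold at its apex only if $Y$ is a homology $(n-2)$-sphere, whereas for $1\le\lambda\le n-1$ the product $S^{\lambda-1}\times S^{n-\lambda-1}$ is never a homology sphere: its reduced homology is nonzero in a dimension other than $n-2$ (for instance $\tilde H_0\ne0$ when $\lambda=1$ or $\lambda=n-1$, and a middle-dimensional class appears for $2\le\lambda\le n-2$). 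Thus the local homology $H_*(\{f=c\},\{f=c\}\setminus\{p\})$ is not that of a point in ${\bf R}^{n-1}$, so $\{f=c\}$ is not a manifold near $p$, contradicting the previous conclusion. Hence $p\in\operatorname{Fix}(\varphi)$, which proves that $p$ is a zero point of $df$.

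The step I expect to be the main obstacle is the appeal to the tubular flow theorem for a \emph{merely continuous} flow: one must justify the existence of a local section through the non-rest point $p$ (a classical result of Whitney and Bebutov for continuous flows on locally compact metric spaces) and the resulting product chart, since no differentiability of $\varphi$ is available. The accompanying local-homology computation showing that the Morse level set fails to be a manifold at a saddle is elementary, but it should be stated carefully across the ranges $\lambda=1,\ n-1$ and $2\le\lambda\le n-2$, since the naive ``counting of components of $\{f\ne c\}$'' argument only settles the extreme indices and the middle indices genuinely require the local-homology obstruction.
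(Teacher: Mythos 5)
Your route is genuinely different from the paper's. The paper works entirely inside the Morse chart: it places the product of spheres $S\cong S^{r-1}\times S^{n-r-1}$ in the level set of $f$ through $p$, pushes it forward for a short time $\epsilon$ into a small ball around $p\cdot\epsilon$, and derives a contradiction from the fact that the trace of this isotopy would have to sweep across a coordinate subspace on which $f$ has the wrong sign --- a homotopical linking argument against the ascending/descending disks. You instead invoke the flow-box theorem to show that near a non-fixed point the level set $\{f=c\}$ must be a section of the flow, and then contradict the cone structure $c\bigl(S^{\lambda-1}\times S^{n-\lambda-1}\bigr)$ of the Morse level set by a local-homology computation. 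Your version isolates more cleanly where the mere continuity of the flow enters, and your cone computation is correct for every $1\le\lambda\le n-1$; the only cosmetic slip is that for $n=2$, $\lambda=1$ the obstruction is not homology ``in a dimension other than $n-2$'' but rather $\tilde H_0(S^0\times S^0)\cong{\bf Z}^3$ being too large --- your fallback statement that $S^{\lambda-1}\times S^{n-\lambda-1}$ is never a homology $(n-2)$-sphere covers this case.

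The one genuine gap is the one you flag yourself, and it is a little more serious than you suggest: the Whitney--Bebutov theorem for a merely continuous flow on a locally compact metric space does produce a local section $\Sigma'$ through $p$ and the product chart $h$, but it gives no control whatsoever on the topology of $\Sigma'$ --- it is only a compact set, not an $(n-1)$-cell, and one cannot deduce that it is a manifold from the fact that $(-\delta,\delta)\times\Sigma'$ is an open subset of $M$ (Bing-type examples show that a product with an interval can be Euclidean while the factor is not a manifold). So the step ``hence homeomorphic to the $(n-1)$-cell $\Sigma'$; in particular $\{f=c\}$ is a topological $(n-1)$-manifold near $p$'' is not justified as written. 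Fortunately your argument only uses local homology, and that survives: since the tube $h\bigl((-\delta,\delta)\times\Sigma'\bigr)$ is a neighborhood of $p$ in the $n$-manifold $M$, excision together with the relative K\"unneth formula for the excisive couple $\bigl\{\bigl((-\delta,\delta)\setminus\{0\}\bigr)\times\Sigma',\ (-\delta,\delta)\times(\Sigma'\setminus\{\sigma_0\})\bigr\}$ yields $H_j(\Sigma',\Sigma'\setminus\{\sigma_0\})\cong H_{j+1}(M,M\setminus\{p\})$, which is ${\bf Z}$ for $j=n-1$ and $0$ otherwise. Your graph construction identifies a neighborhood of $p$ in $\{f=c\}$ with a neighborhood of $\sigma_0$ in $\Sigma'$, so the local homology of $\{f=c\}$ at $p$ is that of a point of ${\bf R}^{n-1}$ after all, and the contradiction with the cone computation goes through. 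With ``topological manifold'' replaced by ``has the local homology of ${\bf R}^{n-1}$'' throughout, your proof is complete.
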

\begin{proof}
 The case that the Morse index equals 0 or n, is proved in previous lemma.
 Assume that the Morse index of zero-point $p$ is $0<r<n$, and $f(p) =0$. In local coordination near $p\in M$ ,$f$ can be represented by
\begin{eqnarray}
f(x) = -x^{2}_{1} - \cdots - x^{2} _{r} + x^{2}_{r+1} + \cdots + x^{2}_{n}
\end{eqnarray}  
\begin{figure}
\begin{center}
\includegraphics[width=7cm,bb=0 0 400 400]{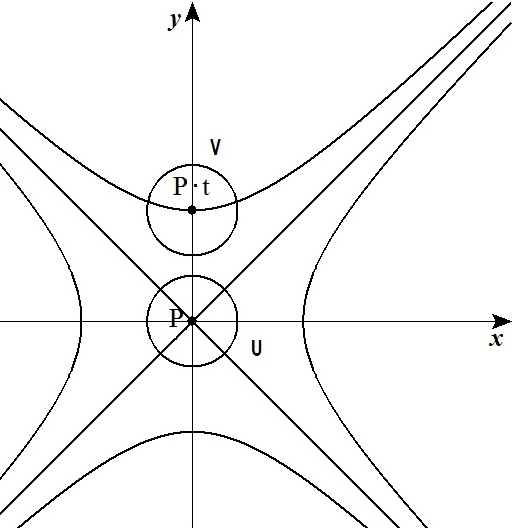}
\caption{$x=x_{1},...,x_{r}$, and $y=x_{r+1},...,x_{n}$}
\end{center}
\end{figure} 
Let $\varphi ^{t}$ be a gradient-like flow of $df$ as a meaning of continuous closed 1-form, and assume that $p\in M$ is not a fixed point of the flow $\varphi ^{t}$. Let $\epsilon >0$ be enough small. 
Let $V$ be a small neighborhood of $p \cdot \epsilon$ where $V \cap f^{-1} (0) \neq \phi$, and let $U$ be a small neighborhood of $p$ where $U \cdot \epsilon \subset V$. 
Let $\delta > 0$ be enough small, and $S= \{ x^{2}_{1} + \cdots + x^{2} _{r} =\delta ,x^{2}_{r+1} + \cdots + x^{2} _{n} =\delta \} $, where $S \subset f^{-1} (0), S \subset U$. 
Then $S \simeq S^{r-1}  \times S ^{n-r-1}$. By the assumption, $S \cdot \epsilon \subset V$. 
While $S$ flows until $\epsilon$, $S$ meets $\{ x_{r+1}= \cdots = x_{n} =0 \}$. But $\{ x_{r+1}= \cdots = x_{n} =0 \} \subset \cup _{c \leq 0 } f(c)$. It is contradict. Then $p \in M$ is a fixed point of $\varphi ^{t}$.
\end{proof}

\begin{example}
 It  proved that the Morse type zero point is a zero point in a sense of continuous version. But without the assumption of the Morse type there are examples that a zero point as meaning of a smooth 1-form is not a zero point  in a sense of a continuous 1-form.\par
 Let $X = {\bf R}$, and $f(x) = x^{3}$, for $x \in {\bf R}$. Let $\varphi (x,t) = x+ t$, then $\varphi$ is a gradient-like flow of a $df$ in a sense of a continuous closed 1-form. $x=0$ is a zero-point of a smooth closed 1-form $df$, but $x=0$ is not a fixed point of $\varphi$. \\
 \begin{figure}
 \centering
 \includegraphics[width=5cm, bb=0 0 400 500]{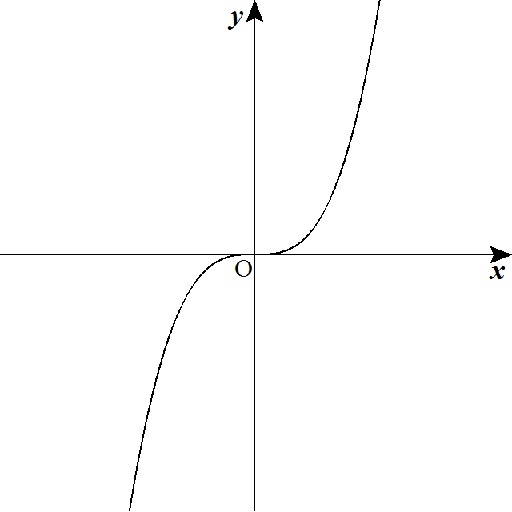}
\caption{$y = x^3$}
\end{figure}
\end{example}

\subsection{Estimate of the number of zeros of a continuous 1-form}
Later sections, we verify our main theorems.
 We defined the zero point of closed 1-form, so now let consider the continuous version of the theorem \ref{homoc}.
 \begin{thm}\label{main}
 Let $X$ be a finite CW-complex, and let $\omega$ be a continuous closed 1-form on $X$, that represents a cohomology class $\xi \in H^{1} (X ; {\bf R})$ Let $\phi$ be a gradient-like flow of $\omega$ and $\operatorname{Fix} (\varphi^{t}) $ is finite set ${p_{1},...,p_{k}} $, where integer $k>0$. If $k < \operatorname{cat}(X, \xi)$, then the flow $\varphi$ has a homoclinic cycle.
 \end{thm}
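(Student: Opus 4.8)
The plan is to prove the contrapositive: assuming that the flow $\varphi$ has no homoclinic cycle, I will construct, for every integer $N>0$, an open cover $X = F\cup F_1\cup\cdots\cup F_k$ satisfying conditions (a) and (b) of the definition of $\operatorname{cat}(X,\xi)$. This exhibits a categorical cover with exactly $k$ pieces that are null-homotopic in $X$, whence $\operatorname{cat}(X,\xi)\le k$, contradicting $k<\operatorname{cat}(X,\xi)$. Throughout I pass to the normal covering $p:\tilde X\to X$ on which $p^{*}\omega = df$ with $f$ strictly decreasing along the lifted flow $\tilde\varphi$, and I verify condition (b') in place of (b); since $\tilde\varphi$ is $\Gamma$-equivariant, the homotopies produced below are automatically equivariant.

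First I treat the set $F$ and the required push-down, and here the hypothesis is not yet needed. Choose pairwise disjoint open neighborhoods $U_j\ni p_j$, small enough that each $\overline{U_j}$ is contractible in $X$ (possible since a finite CW-complex is locally contractible), and let $K = X\setminus\bigcup_j U_j$, a compact set containing no fixed point. The continuous function $g(x)=f(\tilde x)-f(\tilde\varphi^{1}(\tilde x))$, which is independent of the chosen lift $\tilde x$, is strictly positive on $K$: even where the underlying orbit is closed, the single-valued $f$ on $\tilde X$ strictly drops over positive time. By compactness $g\ge\rho>0$ on $K$. Taking $F$ to be the set of points whose trajectory avoids $\bigcup_j\overline{U_j}$ on $[0,T]$ (which is open, since the projection to $X$ of the closed set $\{(x,t):\varphi^t(x)\in\bigcup_j\overline{U_j}\}$ along the compact factor $[0,T]$ is closed) and using the homotopy $h_t(x)=\varphi^{tT}(x)$, so that $\tilde h_1=\tilde\varphi^{T}$, a summation of the per-unit-time decrease gives
\[ f(\tilde{x})-f(\tilde{h}_1(x)) \;\ge\; \rho\lfloor T\rfloor \;\ge\; N \]
once $T\ge N/\rho+1$, which is precisely (b').

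It remains to cover the complement by $k$ pieces null-homotopic in $X$, and this is where the absence of homoclinic cycles is essential. Define $F_j=\bigcup_{t\in[0,T]}\varphi^{-t}(U_j)$, the set of points entering $U_j$ within time $T$; then $F\cup\bigcup_j F_j = X$. The intended null-homotopy retracts each $F_j$ into $\overline{U_j}$ along the flow via the first-entry-time map $x\mapsto\varphi^{\tau(x)}(x)$, after which contractibility of $\overline{U_j}$ in $X$ completes the contraction. \textbf{The control of this retraction is the main obstacle, and it is exactly the point detected by the theorem.} In the presence of a homoclinic cycle $p_1\to\cdots\to p_n\to p_1$, the corresponding descending chain $\tilde p_1\rightsquigarrow\gamma\tilde p_1$ on the covering lets nearby trajectories re-enter $U_j$ arbitrarily often as $T$ grows, winding around the cycle a variable number of times; then $\tau$ is discontinuous and $F_j$ acquires nontrivial topology, so the construction breaks down. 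Assuming no homoclinic cycle, the connecting-orbit relation on $\{p_1,\dots,p_k\}$ is acyclic, so the fixed points may be linearly ordered compatibly with the strict decrease of $f$; a single trajectory can pass near the fixed points only in strictly decreasing order, hence through $\bigcup_j U_j$ at most $k$ times and through each individual $U_j$ essentially once. The crux is to turn this non-recurrence into the required regularity, namely to choose the $U_j$ as clean isolating neighborhoods for which $\tau$ is continuous on $F_j$ and the flow deformation-retracts $F_j$ into $\overline{U_j}$ inside $X$. Once that is established, the families $F,F_1,\dots,F_k$ form the desired cover and the proof is complete.
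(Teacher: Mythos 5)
Your overall strategy matches the paper's: argue the contrapositive, use the flow itself as the deformation producing the set $F$ with a large drop of $f$ on the covering, and try to make the sets $F_j$ null-homotopic by flowing them into small neighborhoods of the fixed points. Your treatment of $F$ is sound (and arguably cleaner than the paper's, which defines $F$ by the condition $\int_{\sigma_p}\omega=-N$ and asserts openness and continuity of $t_p$ directly). But the proof is not complete: you explicitly leave open what you call ``the crux,'' namely the construction of neighborhoods for which the first-entry time $\tau$ is continuous on $F_j$ and the flow retracts $F_j$ into a contractible set, and this is exactly where all the real work, and the only genuine use of the no-homoclinic-cycle hypothesis, lives. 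The paper devotes a separate lemma (the ``gradient convex set'' lemma) to producing, around each isolated fixed point, a closed neighborhood $V_j$ such that for every trajectory the visiting set $J_x=\{t: x\cdot t\in V_j\}$ is an interval and the entry time depends continuously on the point; this is what makes $p\mapsto p\cdot(\tau\phi_j(p))$ a well-defined homotopy of $F_j$ into $V_j$.

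Moreover, your heuristic that the connecting-orbit relation is acyclic and hence ``a single trajectory passes through each $U_j$ essentially once'' does not follow from the absence of homoclinic cycles: a trajectory can pass close to several fixed points without limiting to any of them, re-approach a translate of $V_j$ in the covering, and re-enter it after an arbitrarily long excursion, with no actual connecting orbits closing up into a cycle. The paper controls this with a quantitative condition (c): for $p\in\partial_-V_i$ there is no return time $t_p>0$ with $p\cdot t_p\in\operatorname{Int}V_i$ and $\int_p^{p\cdot t_p}\omega\le -N$. The proof that (c) can be achieved by shrinking the $V_i$ is a compactness and limiting argument: if it failed for every shrinking, one extracts sequences of deep returns, passes to limits of their entry and exit points on $\tilde X_\xi$, and assembles a finite chain of connecting orbits (at most $[N/b]$ steps, where $b$ is a uniform drop of $f$ between distinct neighborhoods) that closes up into a homoclinic cycle, contradicting the hypothesis. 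Without this step, or an equivalent one, your cover is not shown to satisfy condition (a) of the definition of $\operatorname{cat}(X,\xi)$, so the argument as written does not yet prove the theorem.
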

 Then by definition of zero points we conclude next theorem.
\begin{cor}\label{maincor}
(1)Let $X$ be a finite CW-complex, and let $\omega$ be a continuous closed 1-form, and $\xi =[ \omega ] \in H ^1 ( X; {\bf R})$.Suppose that $\omega$ has finite zero points $p_1,..,p_k$ . If $k < \operatorname{cat}(X, \xi)$, then any gradient-like flow that having fixed points only $p_1,..,p_k$ has a homoclinic cycle.\\
 (2)Let $\omega$ be a continuous closed 1-form on a finite CW-complex $X$, and $\omega$ represents $\xi \in H^{1} (X; {\bf R})$ . If $\omega$ admits a gradient-like flow with no homoclinic cycles, then $\omega$ has at least $\operatorname{cat} (X,\xi)$ distinct zeros.
 \end{cor}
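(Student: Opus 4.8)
The plan is to derive both assertions directly from Theorem~\ref{main}, the only additional ingredient being the definition of a zero point: $p$ is a zero of $\omega$ exactly when $p$ is a fixed point of \emph{every} gradient-like flow of $\omega$. In particular, for an arbitrary gradient-like flow $\varphi$ the zero set $Z$ of $\omega$ is always contained in $\operatorname{Fix}(\varphi^{t})$, and the content of the corollary is to read Theorem~\ref{main}, which counts fixed points of a flow, as a statement counting zeros of $\omega$.

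First I would dispose of part (1), which is essentially a re-reading of Theorem~\ref{main}. The hypotheses furnish a gradient-like flow $\varphi$ whose fixed point set is exactly the finite set $\{p_{1},\dots,p_{k}\}$ of zeros of $\omega$, and the standing assumption $k<\operatorname{cat}(X,\xi)$ is precisely the hypothesis of Theorem~\ref{main}. Hence the theorem applies verbatim and produces a homoclinic cycle of $\varphi$, which is the conclusion of (1).

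For part (2) I would argue by contraposition from Theorem~\ref{main}. Let $\varphi$ be the gradient-like flow supplied by the hypothesis, which has no homoclinic cycle. If its fixed point set is finite, then the contrapositive of Theorem~\ref{main} forbids $|\operatorname{Fix}(\varphi^{t})|<\operatorname{cat}(X,\xi)$, so $\varphi$ has at least $\operatorname{cat}(X,\xi)$ fixed points; the remaining task is to convert this count of fixed points into a count of zeros, after which the conclusion that $\omega$ has at least $\operatorname{cat}(X,\xi)$ distinct zeros follows. (When $Z$ is infinite the bound is trivial, since $\operatorname{cat}(X,\xi)$ is finite for a finite CW-complex, so only the finite case carries substance.)

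The hard part, and the only place where genuine work is required, is exactly this last conversion. Because a zero must be fixed by every gradient-like flow, a priori $Z$ is merely a subset of $\operatorname{Fix}(\varphi^{t})$, and this inclusion can be strict: a particular gradient-like flow may fix points that some other gradient-like flow moves, and such points are not zeros of $\omega$. Thus a lower bound on $|\operatorname{Fix}(\varphi^{t})|$ does not by itself bound $|Z|$ from below. To close the argument I would verify that the flow in (2) can be taken with $\operatorname{Fix}(\varphi^{t})=Z$ finite, the continuous counterpart of the smooth fact that a gradient-like vector field for $\omega$ vanishes precisely at the zeros of $\omega$, so that the fixed points produced by Theorem~\ref{main} are genuine zeros. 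With that identification in place, both (1) and (2) are immediate consequences of Theorem~\ref{main}.
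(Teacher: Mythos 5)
Your treatment of part (1) is correct and coincides with what the paper does: a gradient-like flow whose fixed-point set is exactly the finite set of zeros $\{p_{1},\dots,p_{k}\}$ with $k<\operatorname{cat}(X,\xi)$ satisfies the hypotheses of Theorem~\ref{main} verbatim, and the paper's only further input here is the Proposition of the subsection on gradient-like flows fixing zero-points, which constructs such a flow so that (1) is not vacuous.

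For part (2) you have correctly isolated the crux --- the inclusion $Z\subseteq\operatorname{Fix}(\varphi^{t})$ may be strict, so a lower bound on the number of fixed points of the given flow does not bound the number of zeros --- but the move you propose to close the gap does not work. You cannot simply ``take'' the flow in (2) to satisfy $\operatorname{Fix}(\varphi^{t})=Z$: that flow is supplied by the hypothesis together with the property of having no homoclinic cycles, and replacing it by the flow produced by the paper's Proposition (which fixes exactly the zeros) may reintroduce homoclinic cycles, since nothing in that construction controls them. Nor is it true under the paper's definition that every gradient-like flow has $\operatorname{Fix}=Z$: the definition only constrains $f_{U}$ on $U-(\operatorname{Fix}(\varphi^{t})\cap U)$, so for instance the identity flow is vacuously gradient-like and fixes every point, which is exactly why the ``smooth fact'' you invoke (a gradient-like field vanishes precisely on the zero set) has no continuous counterpart here. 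There is also a smaller case-analysis slip: Theorem~\ref{main} requires $\operatorname{Fix}(\varphi^{t})$ to be finite, and the case where $\operatorname{Fix}(\varphi^{t})$ is infinite while $Z$ is finite is covered neither by your parenthetical (which treats $Z$ infinite) nor by the theorem. To be fair, the paper itself gives no argument for (2) beyond ``by definition of zero points,'' so the difficulty you flagged is present there as well; but as written your proposal does not supply the missing step either.
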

 As we will show later,there exists the gradient flow in (1), that is fixed only zero-point of the continuous closed 1-form. So, if $k < \operatorname{cat}(X, \xi)$, then there always exist a gradient-like flow that has a homoclinic cycle. This proof is described following sections, and in the next section we prove the lemma for this theorem. 
 
 \subsection{Gradient Convex set} 
 To prove these theorems, we need a lemma below.
\begin{lem}\label{g-clem}
 Let $X$ be a CW-complex and let $f : X \rightarrow {\bf R} $ be a continuous function, and $\omega = df$. Let $\varphi ^{t} $ be a gradient-like flow of $\omega$, and assumed that $\operatorname{Fix} (\varphi )$ is a isolated set.Let $p \in X$ be a fixed point of $\varphi$. Let $W \subset M$ be a neighborhood of $Z$. Then there exist a open neighborhood of $p$, $U\subset W$, such that\\
 (1) For $x \in X$, $J_{x} = \{t \in {\bf R} | x \cdot t \in U \}$ is convex, that is, it is either empty or an interval.\\
 (2) if $A$ is the set of points $x \in X$ such that the interval $J_{x}$ is non-empty and bounded below, then the function 
 $$
 A \rightarrow {\bf R}, x \rightarrow \inf J_{x}
 $$
 is continuous.
 \end{lem}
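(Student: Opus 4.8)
The plan is to recognize $\{p\}$ as an isolated invariant set of $\varphi$ and to take for $U$ an \emph{isolating block} around $p$ contained in $W$; properties (1) and (2) are precisely the defining features of such a block, so the whole difficulty is concentrated in producing the block in the continuous (non-smooth) setting.

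First I would record the basic monotonicity coming from the definition of a gradient-like flow: since $f$ is a strict Lyapunov function for $\varphi$, for every $x$ the function $t \mapsto f(x \cdot t)$ is strictly decreasing as long as $x\cdot t$ stays off $\operatorname{Fix}(\varphi)$. Using this together with the hypothesis that $\operatorname{Fix}(\varphi)$ is isolated, I would choose a compact neighborhood $K$ of $p$ with $K \subset W$ and $K \cap \operatorname{Fix}(\varphi) = \{p\}$, and then show that the maximal invariant subset of $K$ is exactly $\{p\}$. This is a short limit-set argument: for any $x$ whose full orbit remains in $K$, the bounded monotone function $t\mapsto f(x\cdot t)$ has limits as $t\to\pm\infty$, so the $\alpha$- and $\omega$-limit sets lie in level sets of $f$ and are invariant, hence (again by strict decrease) consist of fixed points in $K$, i.e.\ equal $\{p\}$; comparing the two limiting $f$-values then forces $f$ to be constant along the orbit, so $x=p$. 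Thus $\{p\}$ is an isolated invariant set and $K$ is an isolating neighborhood.

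Next I would build $U$. The flow direction is controlled for free by $f$: writing $c=f(p)$, on any slab $S_\eta=\{\,c-\eta<f<c+\eta\,\}$ the set $\{t : f(x\cdot t)\in(c-\eta,c+\eta)\}$ is automatically an interval, because $t\mapsto f(x\cdot t)$ is monotone. The real content is the transverse direction: I would intersect $S_\eta$ with a suitable open neighborhood $B$ of $p$ so that the open set $U=S_\eta\cap B\subset K$ becomes an isolating block, meaning that every boundary point leaves $U$ immediately in forward or in backward time and no orbit is internally tangent to $\partial U$. For such a $U$ the set $J_x=\{t : x\cdot t\in U\}$ is a single (possibly degenerate or unbounded) interval, which is (1); and when $J_x$ is bounded below, $\inf J_x$ is the instant at which the orbit of $x$ first meets the entrance part of $\partial U$. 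Continuity of that entrance instant, property (2), then follows from continuity of $\varphi$ together with the transversality of the crossing: for orbits entering through the top $\{f=c+\eta\}$ it is the unique time with $f(x\cdot t)=c+\eta$, which varies continuously by strict monotonicity of $t\mapsto f(x\cdot t)$ and a standard squeeze argument, while entrances through the side wall are cut out cleanly by the block boundary.

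The hard part is the construction of $B$ in the merely continuous category. In the smooth case one simply makes $\partial B$ transverse to a gradient-like vector field, but here I cannot differentiate, and I must rule out that an orbit leaves $U$ through the side wall $\partial B\cap S_\eta$ and later re-enters. The one free gain is that an exit through the bottom $\{f=c-\eta\}$ can never be followed by re-entry, since $f$ stays below $c-\eta$ forever afterward; hence only side exits are dangerous. Eliminating them amounts to choosing $B$ whose side wall is swept out by short orbit arcs that the flow crosses exactly once, a Wazewski-type condition; equivalently, producing a continuous isolating block for the isolated invariant set $\{p\}$ inside the isolating neighborhood $K$. Carrying this out—using the local compactness and $\mathrm{ANR}$ structure available in a neighborhood of $p$ in the CW-complex, and the adapted Lyapunov function $f$ to define both the slab and a compatible transverse gauge—is the crux of the proof, and is exactly the step where the convexity in (1) and the continuity in (2) are simultaneously secured.
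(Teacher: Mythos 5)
Your proposal correctly frames the lemma as the construction of an isolating block for the isolated invariant set $\{p\}$, and several of your supporting observations match the paper: the limit-set argument using monotonicity of $t\mapsto f(x\cdot t)$, the remark that an exit through the bottom level $\{f=c-\eta\}$ can never be followed by re-entry, and the derivation of (2) from strict monotonicity plus the intermediate value theorem (this last step is essentially verbatim the paper's proof of (2)). However, you explicitly defer the central construction --- the ``transverse gauge'' ruling out side exits followed by re-entries --- calling it ``the crux'' without supplying an argument. That is precisely the content the lemma requires, so as written the proposal has a genuine gap: nothing in it actually produces a neighborhood $U$ for which $J_x$ is an interval.

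For comparison, the paper fills this gap with an explicit two-Lyapunov-function construction. It sets $\Psi_{\epsilon}(x)=f(x\cdot(-\epsilon))-f(x)$, a continuous nonnegative gauge vanishing exactly at fixed points, and then $h(x)=\inf_{t\in I_{x}}\Psi_{\epsilon}(x\cdot t)$, the infimum over the time the orbit of $x$ spends in a small neighborhood $V$ of $p$; a (somewhat delicate) $\limsup/\liminf$ argument shows $h$ is continuous. With cutoff functions $\eta,\mu$ it forms $F_{\pm}=\pm f+\eta h+\mu$, so that near $p$ the function $F_{+}$ is strictly decreasing and $F_{-}$ strictly increasing along the flow, and takes $U_{c}=\{\max(F_{+},F_{-})<c\}$ for small $c$. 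Entrance into $\bar U_{c}$ forces $F_{+}=c\geq F_{-}$ and exit forces $F_{-}=c\geq F_{+}$; strict monotonicity of $F_{-}$ prevents re-entry while the orbit stays in $U$, and a separate compactness claim (crossing the annulus between $V$ and $U$ costs at least $\beta>0$ in $f$, with $c<\beta$) prevents re-entry after the orbit leaves $U$. If you want to complete your proposal you would need to supply something equivalent to this $h$-construction (or another explicit continuous isolating-block construction in the spirit of Conley theory); the slab $S_{\eta}$ and the Lyapunov function $f$ alone do not determine the set $B$ whose existence your argument presupposes.
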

 
 \begin{proof} Now we prove the (1) of Lemma.

  For simplicity, suppose that $\bar{W}$ is not contained other fixed point, and $f(p)=0$. For small $\epsilon >0$, let $\Psi _{\epsilon} : X \rightarrow {\bf R}$ be 
  $$
  \Psi _{\epsilon } (x ) = f (x \cdot (-\epsilon ) ) - f(x), 
  $$
  where $x\in X$.
 It is a continuous function, and $\Psi _{\epsilon} (p) = 0$. And let $h : V \rightarrow {\bf R}$ be 
 $$
 h(x) = \inf _{t \in I_{x}} \{ \Psi _{\epsilon} (x \cdot t) \}
 $$
 where $I_{x}=\{ t\in {\bf R} ; x \cdot t \in V\} $.
 \begin{clm}
 The function $h$ is a continuous.
 \end{clm}
 Let $ \{x_{i} \}$ be a sequence in $V$ converging $x \in V$ as $i \rightarrow \infty$.\par
 We suppose that $\varlimsup_{i \rightarrow \infty} h (x _{i} ) > h(x)$. If $h(x) \neq 0$, there exist $\tau \in {\bf R}$ such that $h(x) = \Psi _{\epsilon} (x \cdot \tau) $. Since the flow and $\Psi _{\epsilon}$ is both continuous, for small $\delta >0$ there exist a neighborhood of $p$ ,$U$, and real positive number, $\alpha >0$, such that  for any $y \in U$, and any $s \in [-\alpha, \alpha]$,
 \begin{eqnarray}
 |\Psi_{\epsilon} (y \cdot (\tau + s)) - h(x) | < \delta.
 \end{eqnarray}
 It contradict since for enough large any $i$, $x_{i}$ are in $U$. If $h(x) = 0$, for enough small $\delta >0$ there exist $\tau \in {\bf R}$ such that $\Psi _{\epsilon} ( x \cdot \tau)= \delta$. Then we conclude contradict similarly as above. So  $\varlimsup_{i \rightarrow \infty} h (x _{i} ) \leq h(x)$.\par
 Next we assume that $h(x) > \varliminf_{i \rightarrow \infty} h (x _{i} )$. If $\varliminf_{i \rightarrow \infty} h (x _{i} )$ is not 0, for enough small $\delta >0$ there exist a sequence $\tau _{i} \in {\bf R}$ such that for any $i$, $\Psi _{\epsilon} ( x_{i} \cdot \tau _{i}) < h(x)- \delta $. If necessary, we take a sub sequence of $\{ \tau _{i} \}$, and suppose that $\tau_{i} \rightarrow \tau (< \infty )$. Then
 \begin{eqnarray}
 \lim h(x _{i}) = \lim \Psi _{\epsilon} (x_{i} \cdot \tau _{i} ) = \Psi_{\epsilon} (x \cdot \tau)
 \end{eqnarray}
  It contradict. If $\varliminf_{i \rightarrow \infty} h (x _{i} )$ is $0$, we conclude a contradiction similarly to above. So,  $h(x) \leq \varliminf_{i \rightarrow \infty} h (x _{i} )$.\par
  Then $\varlimsup_{i \rightarrow \infty} h (x _{i} ) \leq h(x) \leq \varliminf_{i \rightarrow \infty} h (x _{i} ) $, hence the function $h(x)$ is continuous.\par
  
 Let $V, U, U'$ be open neighborhoods of $p$ such that $p \subset V \subset \bar{V} \subset U \subset \bar{U} \subset U \subset \bar{U'}  \subset W$. And let continuous $\mu , \eta : X \rightarrow {\bf R}$ such that
 \begin{eqnarray}
\mu = \left\{ \begin{split} &0 &on \, U \\
 &1 &on X-U' \end{split} \right.\\
\eta = \left\{ \begin{split} &1 & on \, U' \\
&0 &on X -W
\end{split} \right.
 \end{eqnarray}
 \begin{clm}
 There exist $\beta >0$ such that for $q \in \partial V, q\cdot t \in \partial U,$ ,where $t \in {\bf R}$,
\begin{eqnarray}
f (q ) - f(q \cdot t) > \beta
\end{eqnarray} 
 \end{clm}
 \begin{figure}[htbp]
  \begin{center}
 \includegraphics[width=5cm, bb=0 0 300 300]{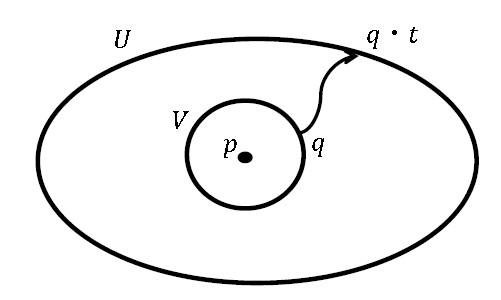}
\caption{Open set $U$ and $V$}
\end{center}
\end{figure}
 Suppose that there exist a sequence $\epsilon _{i} >0$ converging 0 as $i \rightarrow \infty$ and $q_{i} \in \partial V$ such that 
 \begin{eqnarray}
 f(q_{i}) - f(q_{i} \cdot t_{i}) < \epsilon,
 \end{eqnarray}
 where $q_{i} \in \partial U, t_{i} >0$. Losing no generality, we can assume that $q_{i} \rightarrow q \in \partial V$, as $i  \rightarrow \infty$. By assumption of $U,V$, $t_{i}$ does not converge to 0. Then enough small $\delta >0$, $\Psi _{\delta} (q_{i}) \rightarrow 0$, as $i \rightarrow \infty$. Hence $q\in \partial V$ is fixed point of the flow, but it contradict to the assumption that there is no fixed point in $W$, other $p$. So we conclude the above claim.\par
 Define two continuous functions $F_{+},F_{-} : X \rightarrow {\bf R}$:
 \begin{eqnarray}
 F_{\pm} := \pm f + \eta h + \mu 
 \end{eqnarray}
On $U-\{p \}$, $F_{+}$ is strictly decreasing along the flow, and $F_{-}$ is strictly increasing along the flow.\par
 Define $g : X \rightarrow {\bf R}$ as $g = \operatorname{max} \{ F_{+} , F_{-} \}$. It is a continuous function, and $g(p) =0$. Let $c>0$ be enough small for $U_{c} = \{ x \in X ; g(x) < c \} \subset V $, and $c < \beta$ Then $\bar{U}_{c} =\{x\in X ; g(x) \leq c  \}$, and $\partial \bar{U}_{c} = g^{-1} (c) $ We show that this $U_{c}$ satisfies lemma's conditions.\par
Let $x\in X$. The flow $x \cdot t$ enters in $\bar{U}_{c}$ at time $t=a$ if and only if 
\begin{eqnarray}
F_{+} (x \cdot a ) = c \geq F_{-} (x \cdot a). \label{flow}
\end{eqnarray}
The flow $x \cdot t$ leaves from $\bar{U}_{c}$ at time $t=b$ if and only if 
\begin{eqnarray}
F_{-} (x \cdot b ) = c \geq F_{+} (x \cdot b).
\end{eqnarray}
 Let show that the flow does not enter in $\bar{U}_{c}$ after leaving from $\bar{U}_{c}$. In $U$,since the function $F_{-}$ is strictly increasing along the flow, the flow does not satisfy \ref{flow}  twice. If the flow leaves from $U$ after leaving from $\bar{U}_{c}$, the value of $f$ increase more than $\beta$ by above claim. We assume $c< \beta$, hence this flow does not come back to $\bar{U} _{c}$. So, $\bar{U}_{c}$ satisfies lemma's conditions.
 
 Nextly we proof the (2).  The function $x \mapsto a_{x} $ is defined by $F_{+} (x \cdot a_{x}) =c$. Let $\delta >0$ be small. From the continuity of the flow and $F_{+}$, there exist a neighborhood of $x$ ,$U$ , such that for any $y\in U$
 \begin{eqnarray}
 F_{+} (y \cdot (a_{x} - \delta ) ) < c , F_{+} (y \cdot a_{x} + \delta ) > c.
 \end{eqnarray}
 From the intermediate value theorem there exist $a_{y} \in {\bf R}$ such that $a_{x} - \delta < a_{y} < a_{x} + \delta$, and $f(y\cdot a_{y}) = c$. So this function is continuous.
 \end{proof}
 
 \subsection{The proof of theorem \ref{main}}
 \begin{proof} of Theorem (\ref{main})\par
 Suppose that the flow $\varphi ^{t}$ does not have a homoclinic cycle. Fix a large integer $N>0$.  Let $U_{i}$ be a small closed contractible neighborhood of $p_{i}$, such that they are disjoint each other. 
 \begin{clm}
 There exist a closed neighborhood $V_{i} \subset U_{i}$ such that\\
 (a) $p_{i} \in V_{i} $, and $V_{i} \subset \operatorname{Int} U_{i}$\\
 (b) $V_{i}$ is a gradient convex set in following sense;\par
Let $\pi : \tilde{X}_{\xi} \rightarrow X$ is a covering corresponding to the kernel of period homomorphism, and let $f: \tilde{X}_{\xi} \rightarrow {\bf R}$ be a continuous function that $\pi ^{*} \omega = df$. Let $\tilde{ \phi} ^{t} $ be the lifted flow of $\phi ^{t}$. For any lift of $V_{i}$ $J_{x} = \{t\in {\bf R} ; x\cdot  t \in V_{i} \}$ is ether empty or it is a closed interval possibly half infinite or degenerated to a point, where $x\in \tilde{X}_{\xi}$.\\
(c) Let $\partial _{-} V_{i} $ denote the set of $p \in \partial V_{i}$ such that for any sufficiently small $\tau >0$, $p \cdot \tau \notin V_{i}$. Then we require that for any $p\in \partial _{-} V_{i}$ there exist no real number $t_{p} > 0$ such that $p \cdot t_{p} \in \operatorname{Int} V_{i},$ and $\int _{p} ^{p\cdot t_{p} } \omega \leq -N$, where the integral in calculated along the flow $\sigma _{p} : [0, t_{p} ] \rightarrow X, \sigma _{p} (t) = p\cdot t$.
\end{clm}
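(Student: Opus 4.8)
The plan is to separate the three requirements: (a) and (b) are local and follow from the gradient-convex Lemma \ref{g-clem}, while only (c) uses the standing hypothesis that $\varphi^t$ has no homoclinic cycle. First I would pass to the covering $\pi:\tilde X_\xi\to X$ on which $\pi^*\omega=df$, and note that the lifted flow $\tilde\varphi^t$ is gradient-like for $df$ with fixed-point set $\pi^{-1}\{p_1,\dots,p_k\}$, which is discrete and hence isolated. Fixing a lift $\tilde p_i$ of $p_i$ inside a single sheet over $U_i$ and applying Lemma \ref{g-clem} (with $W$ a small sheet-neighborhood of $\tilde p_i$) produces a closed sublevel neighborhood $\tilde V_i$ of $\tilde p_i$, depending on a small parameter $c>0$, for which every intersection interval $J_x=\{t:x\cdot t\in\tilde V_i\}$ is convex and closed and whose entry-time function is continuous. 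Projecting, $V_i:=\pi(\tilde V_i)$ is a closed neighborhood of $p_i$; taking $c$ small forces $V_i\subset\operatorname{Int}U_i$, giving (a), and the convexity of $J_x$ for each lift of $V_i$ (the half-infinite and one-point cases occurring exactly when the orbit limits onto a fixed point) is precisely (b). Here I use that $f$ changes only by the additive period $\int_\gamma\omega$ under a deck transformation, so every covering translate of $\tilde V_i$ is again gradient-convex.

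For (c) I would fix $N$ and argue by contradiction on the diameter of $V_i$. If no sufficiently small choice satisfied (c), there would be neighborhoods $V_i^{(n)}\searrow\{p_i\}$, points $q_n\in\partial_-V_i^{(n)}$, and times $t_n>0$ with $q_n\cdot t_n\in\operatorname{Int}V_i^{(n)}$ and $\int_{q_n}^{q_n\cdot t_n}\omega\le -N$. Lifting the segment $\sigma_n(s)=q_n\cdot s$ starting at the chosen $\tilde q_n\to\tilde p_i$, this inequality becomes $f(\tilde q_n\cdot t_n)-f(\tilde q_n)\le -N$, with $\tilde q_n\cdot t_n\to g_n\tilde p_i$ for suitable deck transformations $g_n$. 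In particular $t_n\to\infty$, because a drop of at least $N$ cannot accumulate while both endpoints collapse onto $p_i$ in bounded time. Thus the segments leave every fixed neighborhood of the $p_j$ and repeatedly cross the compact transit region $K=X\setminus\bigcup_j\operatorname{Int}V_j$, on which the strict Lyapunov property makes $f$ decrease at a rate bounded below by some $\delta_0>0$.

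The heart of the argument, and the step I expect to be the main obstacle, is the compactness input that turns these segments into an honest homoclinic cycle. The key fact I would need to establish, from the strict Lyapunov property of $f$ on the cover, is that the gradient-like flow has no nontrivial recurrence, so that the sojourn time of a single crossing of the fixed-point-free compact set $K$ is uniformly bounded; together with the rate bound $\delta_0$ this lets me reparametrize and time-shift the $\sigma_n$ and extract, along a subsequence, a finite broken trajectory $p_i=r_0\to r_1\to\cdots\to r_m=p_i$ whose vertices are fixed points and whose edges are genuine connecting orbits of $\varphi^t$. Since the total $f$-drop is at least $N>0$ the broken trajectory is nonconstant, and since it begins and ends at $p_i$ it already contains a homoclinic cycle — if some vertex repeats, the portion between two occurrences is one, and otherwise $r_0=r_m=p_i$ closes it directly — contradicting the hypothesis. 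The delicate points throughout are that $\varphi^t$ is merely continuous, so that ODE-type compactness is unavailable and one must rely instead on equicontinuity of $\varphi$ on compacta and on monotonicity of $f$ along the lifted flow, and the verification that a single transit crossing cannot consume unbounded time, which is precisely where the absence of recurrence enters. Granting this, for the fixed $N$ every sufficiently small $V_i$ satisfies (c), completing the claim.
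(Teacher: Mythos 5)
Your proposal follows essentially the same route as the paper: (a) and (b) come from Lemma \ref{g-clem} applied on the cover $\tilde X_\xi$, and (c) is proved by contradiction by lifting the violating orbit segments, extracting limit connecting orbits between fixed points, and using a uniform lower bound on the $f$-drop between distinct fixed-point neighborhoods (the paper's auxiliary constant $b$, your $\delta_0$ on the transit region) to terminate the resulting chain after at most $[N/b]$ links and produce a homoclinic cycle. The compactness step you flag as the main obstacle is exactly where the paper concentrates its (still rather informal) effort, namely the auxiliary claim producing $b$ and the verification that $\lim_{t\to-\infty}q_i\cdot t=p_i$ and the forward limit exist, via monotonicity of $f$ along the flow and the openness of the set $\{(x,s)\,;\, f(x\cdot s)>f(p_i)+\epsilon\}$.
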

Firstly prove next claim.
\begin{clm}
There exist $b > 0$, such that for any $p \in \partial _{-} V_{i}$ if there exist $t>0$ such that $p \cdot t \in V_{j}$, then 
\begin{eqnarray}
\int _{p} ^{p \cdot t} \omega < -b
\end{eqnarray}
\end{clm}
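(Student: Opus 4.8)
The plan is to argue by contradiction, using compactness of $X$ together with the two structural features already in hand: gradient convexity of the $V_i$ and the absence of fixed points outside $\bigcup_l \operatorname{Int} V_l$. Suppose no such $b>0$ exists. Then for every $n$ there are indices $i_n,j_n$, a point $p_n \in \partial_- V_{i_n}$, and a time $t_n>0$ with $p_n\cdot t_n \in V_{j_n}$ and $-\int_{p_n}^{p_n\cdot t_n}\omega < 1/n$. Along the flow the integral of $\omega$ is a sum of nonpositive local drops of the $f_U$ (the flow is gradient-like), so $-\int_{p_n}^{p_n\cdot t_n}\omega\ge 0$ always, and hence $-\int_{p_n}^{p_n\cdot t_n}\omega\to 0$. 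Since there are only finitely many fixed points, and thus finitely many neighborhoods, I may pass to a subsequence on which $i_n\equiv i$ and $j_n\equiv j$ are constant. By gradient convexity the trajectory cannot re-enter $V_i$ once it has left, so $j\neq i$, and I may take $t_n$ to be the \emph{first} time the orbit meets $\bigcup_{l\ne i}V_l$; then $p_n\cdot[0,t_n)$ lies in the compact, fixed-point-free set $K:=X\setminus\bigcup_l\operatorname{Int}V_l$. Bounding this first-entrance integral suffices, since any later re-entry only makes the integral more negative. Finally, by compactness of $\partial V_i$ I may assume $p_n\to p_*\in\partial V_i$, and I record that $p_*$ is \emph{not} a fixed point, because the only fixed point of $U_i\supset V_i$ is $p_i\in\operatorname{Int}V_i$.

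I then split on the behaviour of $t_n$. If $t_n$ stays bounded, pass to a subsequence with $t_n\to t_*<\infty$. Continuity of the flow gives $p_n\cdot t_n\to p_*\cdot t_*\in V_j$, and continuity of the period integral — most transparently upstairs, where $\int_p^{p\cdot t}\omega=f(\tilde p\cdot t)-f(\tilde p)$ for the lifted flow $\tilde\varphi^t$ of Claim (b) — gives $-\int_{p_*}^{p_*\cdot t_*}\omega=0$. Now $t_*=0$ would force $p_*\in V_i\cap V_j=\emptyset$, which is impossible, so $t_*>0$; but $f_U$ is a strict Lyapunov function off the fixed set, so an orbit segment of positive length issuing from the non-fixed point $p_*$ has \emph{strictly} negative integral, contradicting the value $0$.

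The remaining case $t_n\to\infty$ is the main obstacle, and it is here that compactness of $K$ and fixed-point-freeness must be combined into a uniform drop rate. The plan is to set $m:=\min\{\,-\int_q^{q\cdot 1}\omega \;:\; q\in K,\ q\cdot[0,1]\subset K\,\}$; the set of admissible $q$ is closed in $X$ hence compact, and $q\mapsto -\int_q^{q\cdot 1}\omega$ is continuous (again clearest upstairs as $f(\tilde q)-f(\tilde q\cdot 1)$, which is independent of the chosen lift). Since $K$ contains no fixed point, every such orbit segment $q\cdot[0,1]$ is nonconstant and $f_U$ strictly decreases along it, so the minimand is strictly positive at each admissible $q$; by compactness $m>0$. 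Partitioning $[0,t_n-1]$ into about $t_n$ unit subintervals, each of whose initial points is admissible, I get $-\int_{p_n}^{p_n\cdot t_n}\omega\ge \lfloor t_n-1\rfloor\, m\to\infty$, contradicting $-\int_{p_n}^{p_n\cdot t_n}\omega\to 0$. Both cases being impossible, the required constant exists; concretely I would take $b$ to be half the infimum of $-\int_p^{p\cdot t}\omega$ over all such transit segments, which the argument shows to be positive and uniform in $i$ and $j$.
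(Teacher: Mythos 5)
Your overall strategy --- contradiction, passing to a convergent subsequence of starting points on $\partial_- V_i$, and splitting on whether the transit times $t_n$ stay bounded --- is the same compactness argument the paper uses, and in one respect you are more complete than the paper: the paper simply asserts ``losing no generality we can assume $t_n \to \tau > 0$'' and never addresses unbounded transit times, whereas your uniform drop rate $m>0$ on the compact fixed-point-free set $K$ genuinely disposes of the case $t_n\to\infty$. Your bounded case (limit segment of positive length with zero drop forces a fixed point on $\partial V_i$, contradicting that the only fixed point near $V_i$ is $p_i\in\operatorname{Int}V_i$) matches the paper's use of $\Psi_\delta$.

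There is, however, one genuine error: the assertion that ``by gradient convexity the trajectory cannot re-enter $V_i$ once it has left, so $j\neq i$.'' Gradient convexity (condition (b)) says $J_x$ is an interval for a \emph{fixed lift} of $V_i$ in $\tilde X_\xi$; downstairs the orbit can perfectly well leave $V_i$ and return to $V_i$ after winding around, landing in a different translate $gV_i$ upstairs. This is not a degenerate case to be discarded --- it is precisely the homoclinic situation the claim must control, and the paper later applies the claim with returns to $gV_i$. Excluding $j=i$ costs you in exactly one place: you rule out $t_*=0$ by $p_*\in V_i\cap V_j=\emptyset$, which fails when $j=i$. The repair is to take $t_n$ to be the first positive entrance time into $\bigcup_l V_l$ (positive because $p_n\in\partial_-V_i$ and the other $V_l$ are closed and disjoint from $V_i$), and to rule out $t_n\to 0$ when $j=i$ by noting that the lifted orbit must exit an evenly covered neighborhood $W_i\supset V_i$ before it can reach the disjoint translate $g\tilde V_i$, so the orbit travels a distance at least $d(\partial V_i,\partial W_i)>0$ in time $t_n$, contradicting uniform continuity of the flow on $X\times[0,1]$. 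With that adjustment your two-case analysis goes through verbatim, since neither the strict-Lyapunov limit argument nor the uniform drop rate ever used $j\neq i$.
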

 Suppose that there exist a sequence $\epsilon _{n} >0$ converging 0 as $i \rightarrow \infty$ and $q_{n} \in \partial V_{i}$ such that 
 \begin{eqnarray}
  \int _{q_{n}} ^{q_{n} \cdot t_{n}} \omega< \epsilon,
 \end{eqnarray}
where $t_{n}>0$, and $q_{n} \cdot t_{n} \in V_{j}$. Since any $V_{i}$ is sufficiently small, losing no generality we can assume that $t_{n} \rightarrow \tau >0$, and $q_{n} \rightarrow q\in \partial V_{i}$ as $n \rightarrow \infty$. Then for sufficiently small $\delta >0$, $\Psi _{\delta} (q _{n}) < \epsilon _{n}$, so $\Psi _{\delta} (q) =0$. It contradict to that $q$ is not a fixed point. \par

 For previous lemma, we can choose an open set $V_{i}$ satisfying (a) and (b) of the claim. Suppose that we can never achieve (c) by shrinking the open set $V_{i}$, satisfing conditions (a) and (b). Then, there exist a seqence $p_{i,n} \in \partial _{-} V_{i}$, $t_{i,n} $ and $s_{i,n}$ for $i=1,2,\cdots $, such that\\
 (1) $p_{i,n} \cdot [s_{i,n} , 0] \subset V_{i} $,and $p_{i,n} \cdot s_{i,n} \rightarrow p_{i} $ for $n \rightarrow \infty$\\
 (2) $p_{i,n} \cdot t_{i,n} \rightarrow p_{i} $ as $n \rightarrow \infty$.\\
 (3)$\int_{p_{i,n} } ^{p_{i,n} \cdot t_{i,n}} \omega \leq -N$\par
 We assume that $p_{i,n} \rightarrow q_{i} \in \partial _{-} V_{i} $. Let $\pi : \tilde{X}_{\xi} \rightarrow X$ be the covering of the period homomorphism of $\xi$, and let $f : \tilde{X} _{\xi} \rightarrow {\bf R}$ be a function such that $\pi ^{*} \omega = df $. And let fix the one of lifted $p_{i} $ and $V_{i}$ identify $V_{i}$ with the lifted one.\par
 Next we want to show $\lim _{t \rightarrow - \infty} q_{i} \cdot t = p_{i}$. If for sufficiently small $\epsilon >0$, there exist $s<0$ such that $f(q_{i} \cdot s ) >f(p_{i} ) + \epsilon $, then for sufficiently large $n>0$, $f(p_{i,n}) > f(p_{i}) + \epsilon$ because the set $\{ (x,s) \in \tilde{X}_{\xi} \times {\bf R} ; f(x\cdot s ) > f(p_{i}) + \epsilon \}$ is open. This contradict to the condition (1), so $f(q_{i} \cdot t) \rightarrow f(p_{i} ) $ as $t \rightarrow - \infty$. Hence we can conclude $q_{i} \rightarrow p_{i}$.
 Next let check that $\lim_{t \rightarrow \infty} q_{i} \cdot t$. There are 2 patterns. The trajectory $q_{i} \cdot t$ in $\tilde{X}_{\xi} $ for large $t$ may either reach the neighborhood $g V_{i}$, where $g$ is a element of translate group, or it may be "caught" by some other fixed point on the way.\par
 In the first case, $\lim_{t \rightarrow \infty}q_{i} \cdot t = g p_{i}$. In otherwise it contradict to the condition (2) similarly to above. In this case the flow has a homoclinic cycle.\par
In the second case, $q_{i} \cdot t \rightarrow p_{j} \neq p_{i}$ on $X$. Suppose that $q_{i} \cdot t \rightarrow h p_{j}$ on $\tilde{X}_{\xi}$ where $h$ is a element of the translate group. From previous claim,
$$
f(g \cdot p_{i} ) + b < f(h p_{j}), f(h p_{j}) + b < f(p_{i}).
$$
 For sufficient large $n$, the trajectory $p_{i,n} \cdot t$ enters in $h V_{j}$ at some point $p'_{i,n} \in \partial (hV_{j}) $, and leave from $h V_{j}$ at some point $p'' _{i,n} \in \partial V_{j}$. Let $\tau _{i,n} \in {\bf R}$ be a positive number that $p' _{i,n} \dot \tau _{i,n} = p'' _{i,n}$. We suppose $p'_{i,n}, p''_{i,n},\tau_{i,n}$, and 
 \begin{eqnarray}
 \lim_{n \rightarrow \infty} p'_{i,n} = q'_{i} \in \partial (h V_{j}) \\
 \lim_{n \rightarrow \infty} p''_{i,n} = q''_{i} \in \partial _{-} (h V_{j}). 
 \end{eqnarray}
 Then $\tau_{i,n \rightarrow \infty}$ as $n \rightarrow \infty$, and similarly to above case we can conclude $\lim _{t \rightarrow - \infty} q'_{i} \cdot t = p_{i}, \lim_{t \rightarrow \infty} q'_{i,n} \cdot t = hp_{j} = \lim_{t\rightarrow -\infty} q''_{i} \cdot t $. And $\lim_{t \rightarrow \infty} q''_{i,n} \rightarrow gp_{i}$ or $h_{1} p_{j_{1}}$ in $\tilde{X}_{\xi} $ where $h_{1}$ is an element of the translate group.\par
 Continuing by induction we obtain that downstairs a homoclinic cycle starting and ending at $p_{i}$.The number of steps in the above process is finite at most $[N / b]$.\par
 This proves the existence of the disks $V_{i}$ with properties (a), (b), (c) assuming the flow has no homoclinic cycles.\par
\begin{figure}[htbp]
  \begin{center}
 \includegraphics[width=5cm, bb=0 0 200 300]{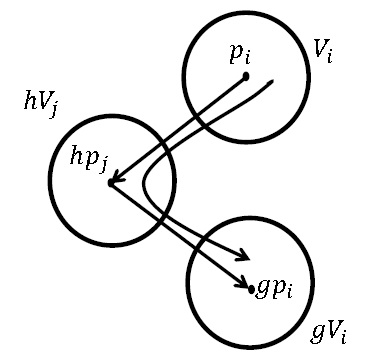}
\caption{Trajectories in $\tilde{X}_{\xi}$}
\end{center}
\end{figure}
 Next we will construct an open cover 
 \begin{eqnarray}
 F \cup F_{1} \cup \cdots \cup F_{k} = X . 
 \end{eqnarray}
 The set $F$ is defined as the set of all points $p\in X$ such that there exists a positive number $t_{p}$, so that, the flow $\sigma _{p} : [0, t_{p}] \rightarrow X$, where $\sigma_{p} (t) = p\cdot t$, satisfies $\int _{\sigma _{p}} \omega = -N$. Then since the flow is continuous $F$ is open and the map $p \rightarrow t_{p} $ is continuous.\par
 Next the set $F_{j}$ is defined as the set of all points $p \in X$ such that there exist $t_{p} >0$, so that $p \cdot t_{p} \in \operatorname{Int} V_{j},$ and 
 \begin{eqnarray}
 \int_{p} ^{t_{p}} \omega > -N. 
 \end{eqnarray}
 For the continuity of the flow $F_{j}$ is open. \par
We will show $F_{j}$ is contractible. For $p\in X$ let $J_{p} := \{ t \leq 0 ; p \cdot t \in V_{j} \}$. For the assumption of $V_{j}$, $J_{p}$ is a disjoint union of closed interval. Let $[\alpha _{p} , \beta _{p}] \subset J_{p}$ be the first interval. If $[\alpha_{p} , \beta _{p}]$ is a point, $p$ does not belong to $F_{j}$, similarly if $p$ is in $\partial _{-} V_{j}$ $p$ is not in $F_{j}$. When $p \in F_{j}$, and $p$ is not in $\operatorname{Int} V_{j}$, $p \cdot t$ is in $\operatorname{Int} V_{j} $ for $\alpha _{p} < t < \beta_{p}$, and $\int_{p} ^{p \cdot \alpha _{p}} \omega > -N$. $\phi_{j} : F_{j} \rightarrow {\bf R}$ is defined as the function such that $\phi _{j} (p) =0$ when $p$ is in $\operatorname{Int} V_{j}$, and $\phi_{j} (p) = \alpha$ when $p$ is in $F_{j} - \operatorname{Int} V_{j}$. For the property of $V_{j}$ $\phi_{j} $ is continuous. Then the homotopy $h_{\tau} : F_{j} \rightarrow X$ is defined as the homotopy such that $h_{\tau} (p) = p (\tau \phi_{j} (p))$, for $\tau \in [0,1]$. Then $h_{0} $ is an inclusion, and $h_{1}$ maps $F_{j}$ into the disk $V_{j}$. So, $F_{j}$ is contractible.\par
 From definitions $X = F \cup F_{1} \cup \cdots \cup F_{k}$. This open cover satisfies the definition of the category with respect to the cohomology class. So, $\operatorname{cat} (X, \xi) > k$.
 \end{proof}
 
 \subsection{Gradient-like flow that fixes zero-points}
 \begin{prop}
 $X$ is finite CW-complex, and $\omega$ is a continuous closed 1-form on $X$ which have finite zero-points $p_1,...,p_k$. There exists a gradient-like flow of the continuous closed 1-form such that fixes only $p_1,...,p_k$. 
 \end{prop}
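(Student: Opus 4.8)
The plan is to construct the flow by hand, gluing local descending flows with a partition of unity, after first passing to the covering of the period homomorphism so that $\omega$ becomes exact. Let $\pi : \tilde{X}_{\xi} \rightarrow X$ be the covering corresponding to the kernel of the period homomorphism and write $\pi^{*}\omega = df$ with $f : \tilde{X}_{\xi} \rightarrow {\bf R}$ continuous, as in the earlier lemma. A $\Gamma$-equivariant continuous flow on $\tilde{X}_{\xi}$ along which $f$ strictly decreases descends to a flow on $X$ along which each local primitive $f_{U}$ strictly decreases, since on each sheet over $U$ one has $f = f_{U}\circ\pi + \mathrm{const}$. Hence it suffices to produce a $\Gamma$-equivariant continuous flow on $\tilde{X}_{\xi}$ that strictly decreases $f$ away from the discrete set $\tilde{Z} = \pi^{-1}\{p_{1},\dots,p_{k}\}$ and whose fixed-point set is exactly $\tilde{Z}$; because $X$ is compact the deck action is cocompact, so all estimates may be made over a compact fundamental domain.

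I would first treat the two regions separately. Near each zero $p_{i}$ choose disjoint contractible neighborhoods $U_{i}$ and build a local flow that fixes $p_{i}$, has no other fixed point in $U_{i}$, and strictly decreases $f$ off $p_{i}$; this is where the hypothesis that $p_{i}$ is a (necessarily isolated) zero enters, guaranteeing at least one such local model. On the complementary compact set $K = X \setminus \bigcup_{i}\operatorname{Int}U_{i}$ every point $q$ is not a zero, so by the definition of a zero-point there is a gradient-like flow $\varphi_{q}$ not fixing $q$; then $f_{U}(\varphi_{q}^{s}(q)) < f_{U}(q)$ for some small $s>0$, and by continuity of $\varphi_{q}$ and $f_{U}$ the same strict inequality holds on a whole open neighborhood $W_{q}$ of $q$. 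Compactness of $K$ extracts finitely many $W_{q_{1}},\dots,W_{q_{r}}$ which, together with the neighborhoods of the $p_{i}$, give a finite open cover of $X$ on each piece of which a local descent is available.

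The decisive step is to amalgamate these local descents into a single genuine flow. A convex combination of flows is not a flow, so I would pass to infinitesimal generators: fix a metric on $X$, encode each local descending flow by a continuous velocity datum, and average them with a partition of unity $\{\rho_{\alpha}\}$ subordinate to the cover. What makes this succeed is that the condition ``strictly decreases $f_{U}$'' is convex, so any convex combination of descent data still strictly decreases $f_{U}$ off the $p_{i}$, and the averaged generator vanishes only there. Integrating it yields a continuous one-parameter flow $\varphi^{t}$ which, by equivariance, descends to $X$; along it each $f_{U}$ is strictly decreasing off $\operatorname{Fix}(\varphi^{t})$, and $\operatorname{Fix}(\varphi^{t}) = \{p_{1},\dots,p_{k}\}$ precisely because the generator is non-vanishing at every non-zero point. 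I expect the genuine obstacle to lie exactly here: a finite CW-complex carries no tangent bundle, so ``velocity datum,'' its averaging, and its integration to an honest one-parameter flow must be set up directly (for instance by embedding $X$ as a subcomplex of a simplicial neighborhood, or by a level-set construction), and one must verify that strict descent and isolation of the fixed points survive the gluing. The Example $f(x)=x^{3}$ illustrates why a non-zero critical point must be actively pushed through rather than left fixed, which is exactly what the convexity of the descent condition enforces.
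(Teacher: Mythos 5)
Your outline locates the right difficulty but does not overcome it: the step you yourself flag as ``the genuine obstacle'' is a gap the argument cannot close as written. On a finite CW-complex a gradient-like flow is merely a continuous flow along which the local primitives $f_U$ strictly decrease; it is not generated by any section of a tangent bundle, so there is no ``velocity datum'' to encode, no linear structure in which to form convex combinations, and no integration step that returns a one-parameter group. Even in the smooth category, the flow of a partition-of-unity average of two generators need not inherit properties of either input in any controllable way, and for merely continuous data uniqueness of integral curves fails, so ``integrating the averaged generator'' does not define a flow at all; the claimed convexity of the descent condition is a statement about derivatives, which do not exist here. Since this amalgamation is the entire content of the proposition, the proof is missing its main step. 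There is a secondary gap as well: near each $p_i$ you posit a local model flow fixing only $p_i$ ``because $p_i$ is a zero,'' but the definition of zero-point (a point fixed by every gradient-like flow) supplies no such local model, and isolation of $p_i$ in the fixed-point set of some flow must be arranged, not assumed.

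The paper's route is genuinely different and avoids generators altogether: it performs a local surgery on flows rather than an averaging of infinitesimal data. Starting from an existing gradient-like flow $\phi$, for each fixed point $q$ of $\phi$ that is not a zero-point one chooses (by the very definition of zero-point) a second gradient-like flow $\varphi$ that moves $q$, takes the gradient-convex neighborhood $\bar{U}_c$ of $q$ supplied by Lemma~\ref{g-clem} --- so that trajectories of $\phi$ enter $\bar{U}_c$ exactly once through $\partial U_c^-$ and leave exactly once through $\partial U_c^+$ and never return --- verifies that $\varphi$ likewise carries $\partial U_c^-$ to $\partial U_c^+$, and then replaces $\phi$ by $\varphi$ inside $U_c$, matching entry and exit data so that the spliced object still satisfies the semigroup law and is still Lyapunov for the $f_U$. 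This boundary-matching is precisely the mechanism your proposal lacks; the surgery removes one superfluous fixed point at a time and terminates after finitely many steps. If you wish to salvage your plan, replace the partition-of-unity averaging by such a local replacement inside gradient-convex neighborhoods.
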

 
 \begin{proof}
Let $p$ be a zero-point of $\omega$. Choose two gradient-like flows of this continuous closed 1-form, $\phi$ and $\varphi$ that $\phi$ fixes $p$, and $\varphi$ does not fixes $p$. We choose $U_c$ for small enough $c>0$ in a proof of Lemma \ref{g-clem} of gradient flow $\phi$.  So, the flow $\phi$ does not enter in $\bar{U}_c$ after leaving from $\bar{U}_c$. We can assume that the flow $\phi$ enter in $\bar{U_c}$ at $\partial U_c^-$, and leave from $\bar{U}_c$ at $\partial U_c^+$. $\omega$ can be represented by $df$ around $p$, where $f$ is a cotinuous funstion near $p$, and $f(p)=0$. Then $f\geq0$ on $\partial U_c^+$, and $f\leq0$ on $\partial U_c^-$.

 Let show the other flow $\varphi$ carries $\partial U_c^-$ to $\partial U_c^+$. Let $c_0=c$ be fixed, and assume $c$ is small enough. We assume that  $\varphi$ enters in $\bar{U}_{c_i}$ at $\partial U_{c_i}^+$, and leave from $\bar{U}_{c_i}$ at $\partial U_{c_i}^+$ as $c_i\rightarrow 0$.So we assume the trajectory of the flow $\varphi$ meets $\partial U_c^+$ at $q_i$, carries $q_i$ to $\partial U_{c_i}^+$ and leave from $U_{c_i}^+$ and $U_c^+$. The trajectory leave from $\partial U_{c_i}^+$ at $\varphi_{\delta_i} (q_i)$, and leave from $\partial U_c^+$ at $\varphi_{\delta'_i} (q_i)$. For shrinking $U_{c_i}$ as $c_i \rightarrow 0$, 
 \begin{eqnarray}
 f(q_i) \rightarrow 0.
 \end{eqnarray}
  We denote the limit of $q_i$ $q_{\infty}$ as $i\rightarrow \infty$. Under assumption $\varphi$ carries $p$ to $\partial U_c^+$ at $\varphi_{\delta'}(p)$. As $i \rightarrow \infty$, $\varphi_{\delta_i} (q_i)$ tends to the point $p$. So, $\varphi_{\delta'} (q_i)$ tends to $\varphi_{\delta'} (p)$ for continuity of $\varphi$. So, the limit of this trajectory pass through $q_\infty$, $p$ and $\varphi_{\delta'} (p)$. But 
  \begin{eqnarray}
  f(q_\infty) = f(p)=0,
  \end{eqnarray}
 it contradict that $p$ is not fixed point of the gradient flow $\varphi$. And so on the $\partial U_c^-$. So, $\varphi$ carries $\partial U_c^-$ to $\partial U_c^+$.
 
So, we can construct a new gradient flow from $\phi$ by replacing the flow $\varphi$ in a neighborhood of zero-point $p$, $U_c$. Zero-points are finite, then this step is stopped in finite times. So, we can construct a gradient-flow of $\omega$ that fixes only zero-points of $\omega$.
\begin{figure}[htbp]
  \begin{center}
 \includegraphics[width=5cm, bb=0 0 300 300]{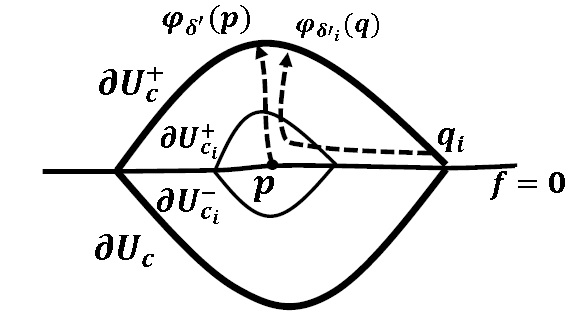}
\caption{The set $\partial U_c^-$ and $\partial U_c^+$}
\end{center}
\end{figure}
 \end{proof}
 
 For this proposition, we can take the opitimum gradient-like flow of a continuous closed 1-form. So, for (1) of Cororaly \ref{maincor} if the number of zero-point is less than $\operatorname{cat}(X, \xi)$, then there exists a gradient-like flow of the continuous closed 1-form that has a homoclinic cycle.
 
\section{Examples}
 In this section we denote examples of continuous closed 1-forms and Lusternik Schnirelman categories with respect to the cohomology class. Let $T_1$ be a closed surface whose genus is 2, and let $T_2$ be a closed surface constructed by shrinking a hole from $T_1$. And we define a closed 1-form$d \theta _1$ on $T_1$ by the angle that axis to a hole, and so on $d\theta_2$ on $T_2$. $T_1$ is a smooth manifold, so $d\theta_1$ can be realized as a smooth closed 1-form, but $T_2$ is not a smooth manifold but a finite CW-complex so $d\theta _2$ is not a smooth closed 1-form, but a continuous closed 1-form. They are described in figure \ref{fig:one}, \ref{fig:two}. So, Farber's Lusternik-Schnirelmann theory can be applied to $d\theta_1$, but not to $d\theta_2$. But we applied our continuous version of Lustenik-Schnirelmann theory to both closed 1-forms. 

The number of zero-point of $d\theta_1$ is 2, and one of $d\theta _2$ is 1. And Lustenik-Schnirelman category with respect to cohomology class is established following formulas,
\begin{eqnarray}
\operatorname{cat} (T_1, [d \theta _1] ) =1\\ 
\operatorname{cat} (T_2, [d\theta _2] ) =1.
\end{eqnarray}
\begin{figure}[hbt]
 \begin{minipage}{0.4\hsize}
  \begin{center}
   \includegraphics[width=70mm,bb=0 0 500 500]{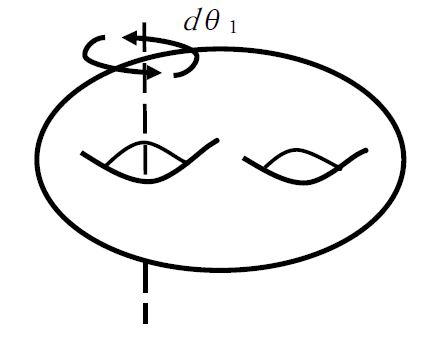}
  \end{center}
  \caption{$T_1$}
  \label{fig:one}
 \end{minipage}
 \begin{minipage}{0.4\hsize}
  \begin{center}
   \includegraphics[width=70mm,bb=0 0 600 600]{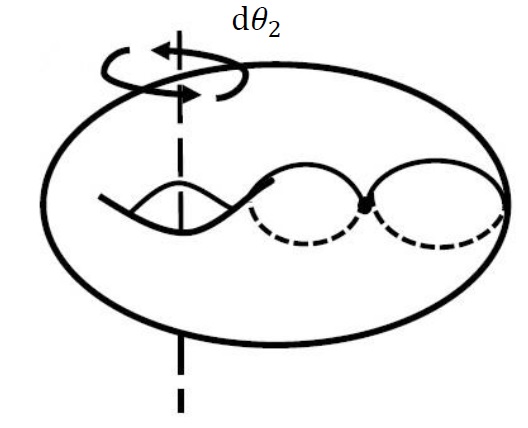}
  \end{center}
  \caption{$T_2$}
  \label{fig:two}
 \end{minipage}
\end{figure}

\begin{remark}
 The examples that the number of closed 1-form is less than Lusternik-Schnirelman category, that is any gradient-like flow of closed 1-form has homoclinic cycles, is showed by M.Farber in \cite{far1}. This example can be constructed by a smooth manifold. Its 1-form can be regarded as continuous closed 1-form, and it is also an exmple as a continuous Lusternik-Schnirelmann category.
 \end{remark}

\end{document}